\newcommand{\norm}[1]{\lVert#1\rVert}
\newcommand{\abs}[1]{\lvert#1\rvert}
\newcommand\restr[2]{{
  \left.\kern-\nulldelimiterspace 
  #1 
  \vphantom{\big|} 
  \right|_{#2} 
  }}
\newcommand{\tp}{{\scriptscriptstyle\mathsf{T}}}
\newcommand{\p}{{\scriptscriptstyle+}}
\DeclareMathOperator{\rank}{rank}
\DeclareMathOperator{\tr}{tr}
\DeclareMathOperator{\diag}{diag}
\DeclareMathOperator*{\argmin}{argmin}
\theoremstyle{definition}
\newtheorem{theorem}{Theorem}[section]
\newtheorem{lemma}[theorem]{Lemma}
\newtheorem{corollary}[theorem]{Corollary}
\numberwithin{equation}{section}
\begin{document}

\title{Generalized matrix nearness problems}
\author{Zihao Li}
\address{Operations Research and Financial Engineering, Princeton University, Princeton, NJ  08540}
\email{zl1665@princeton.edu}
\author{Lek-Heng Lim}
\address{Computational and Applied Mathematics Initiative, University of Chicago, Chicago, IL 60637}
\email{lekheng@uchicago.edu}

\begin{abstract}
We show that the global minimum solution of $\norm{A - BXC}$ can be found in closed-form with singular value decompositions and generalized singular value decompositions for a variety of constraints on $X$ involving rank, norm, symmetry, two-sided product, and prescribed eigenvalue. This extends the solution of Friedland--Torokhti for the generalized rank-constrained approximation problem to other constraints as well as provides an alternative solution for rank constraint in terms of singular value decompositions. For more complicated constraints on $X$ involving structures such as Toeplitz, Hankel, circulant, nonnegativity, stochasticity, positive semidefiniteness, prescribed eigenvector, etc, we prove that a simple iterative method is linearly and globally convergent to the global minimum solution.
\end{abstract}

\subjclass{15A10,
52A27,
65F18,
65F55}

\keywords{matrix approximations, matrix nearness, structured matrices}

\maketitle

\section{Introduction}

In \cite{Fri}, Friedland and Torokhti found a closed-form analytic solution for the generalized rank constrained matrix approximation problem
\begin{equation}\label{eq:Fri}
\min_{\rank(X) \le r}\;\norm{A-BXC}
\end{equation}
with $A \in \mathbb{R}^{m \times n}$, $B \in \mathbb{R}^{m \times p}$, $C \in \mathbb{R}^{q \times n}$, and $\norm{\,\cdot\,}$ the Frobenius norm, generalizing the celebrated result of Eckhart and Young \cite{Eck}. We extend this work in several ways, replacing the rank constraint $\rank(X) \le r$ by
\begin{dingautolist}{192}
\item\label{it:norm} norm constraint: $\lVert X \rVert \le \rho $ for a given $\rho > 0$;
\item\label{it:le} two-sided product constraint: $FXG = H$ for given matrices $F,G,H$;
\item\label{it:eig} spectral constraints: $X$ has a prescribed eigenvalue $\lambda$ or a prescribed eigenvector $v  \ne 0$;
\item\label{it:symm} symmetry constraints: $X$ is symmetric or skew-symmetric;
\item\label{it:struct} structure constraints: $X$ is Toeplitz, Hankel, or circulant;
\item\label{it:pos} positivity constraints: $X$ is positive semidefinite, correlation, nonnegative, stochastic, or doubly stochastic.
\end{dingautolist}
Note that \ref{it:le} includes the important special case $Xg = h$ for given vectors $g,h$.
We shall provide closed-form analytic solutions for \ref{it:norm}--\ref{it:symm}, using \textsc{svd} for \ref{it:norm}--\ref{it:eig}  and \textsc{gsvd} for \ref{it:symm}, with the exception of the prescribed eigenvector problem --- for this and for \ref{it:struct} and \ref{it:pos}, we prove that an iterative algorithm, when applied to these problems, is
\begin{enumerate}[\upshape (i)]
\item globally convergent, i.e., converges for any initial point;
\item linearly convergent, i.e., error decreases exponentially to zero;
\item provably convergent, i.e., converges to a global minimizer, not just a stationary point or local minimizer.
\end{enumerate}
As an addendum, we provide a simpler alternative solution to \eqref{eq:Fri} in terms of singular value decompositions. While it is analytically equivalent to the solution of \cite{Fri} in terms of projection matrices and pseudoinverses, which are numerically unstable to compute, an obvious advantage of our solution is that it is stably computable via singular value decompositions.

We emphasize that we do not treat the problems \ref{it:norm}--\ref{it:pos} as constrained optimization problems and then apply general purpose nonlinear or convex optimization methods. While these problems are stated as optimization problems, our solutions are firmly rooted in numerical linear algebra, in the tradition of \cite{Gol1,Gol2,Hig4,Hig2,Hig3,Hig1,Kell,Rao,Wil2}, and crucially rely on the matrix structures in these problems. In particular, none of our methods would involve taking derivatives; all of them are zeroth order method from the perspective of optimization.

Throughout this article we assume that the dimensions of the matrices $A\in\mathbb{R}^{m \times n}$, $B\in\mathbb{R}^{m \times p}$, $C\in\mathbb{R}^{q \times n}$ satisfy $m\ge p$ and  $n\ge q$ since otherwise we may simply add rows of zeros to $A$ and $B$ or columns of zeros to $A$ and $C$. While all results are stated over $\mathbb{R}$, it is routine to extend them to $\mathbb{C}$.

\section{Closed-form solutions via SVD}\label{sec:frame}

We simplify the objective function $\norm{A-BXC}$ via singular value decomposition and the orthogonal invariance of Frobenius norm and matrix rank. Let 
\[
B=U_B\begin{bmatrix}\Sigma_B\\ 0\end{bmatrix}V_B^\tp, \qquad C=U_C\begin{bmatrix}\Sigma_C & 0\end{bmatrix}V_C^\tp
\]
be singular value decompositions. Since the Frobenius norm is invariant under left and right multiplications by orthogonal matrices, we have
\[
\norm{A-BXC} = \biggl\lVert U_B^\tp AV_C-\begin{bmatrix}\Sigma_B\\ 0\end{bmatrix} V_B^\tp XU_C\begin{bmatrix}\Sigma_C& 0\end{bmatrix} \biggr\rVert
= \biggl\lVert\widetilde{A}-\begin{bmatrix}\Sigma_B\\ 0\end{bmatrix}\widetilde{X}\begin{bmatrix}\Sigma_C& 0\end{bmatrix} \biggr\rVert
\]
where $\widetilde{A} \coloneqq U_B^\tp AV_C$ and $\widetilde{X} \coloneqq V_B^\tp XU_C$.
Let $\rank(B)=s$, $\rank(C)=t$, $S_B \coloneqq \diag\bigl(\sigma_1(B),\dots,\sigma_s(B)\bigr)$, $S_C \coloneqq \diag\bigl(\sigma_1(C),\dots,\sigma_t(C)\bigr)$. Then
\[
\begin{bmatrix}\Sigma_B\\ 0\end{bmatrix}=\begin{bmatrix}
S_B & 0\\
0 & 0
\end{bmatrix},\quad
\begin{bmatrix}\Sigma_C& 0\end{bmatrix}=\begin{bmatrix}
S_C & 0\\
0 & 0
\end{bmatrix}.
\]
Partition $\widetilde{X}$ and $\widetilde{A}$ as
\[
\widetilde{X}=\begin{bmatrix}
X_{11} & X_{12} \\
X_{21} & X_{22}
\end{bmatrix},\quad
\widetilde{A}=\begin{bmatrix}
A_{11} & A_{12} \\
A_{21} & A_{22}
\end{bmatrix}
\]
with $X_{11},A_{11}\in\mathbb{R}^{s \times t}$, and we obtain
\begin{align}
\biggl\lVert \widetilde{A}-\begin{bmatrix}\Sigma_B\\ 0\end{bmatrix}\widetilde{X}\begin{bmatrix}\Sigma_C& 0\end{bmatrix} \biggr\rVert^2
&= \biggl\lVert \begin{bmatrix}A_{11} & A_{12} \\ A_{21} & A_{22} \end{bmatrix}-\begin{bmatrix}S_B & 0 \\ 0 & 0\end{bmatrix}\begin{bmatrix}X_{11} & X_{12} \\ X_{21} & X_{22}\end{bmatrix}\begin{bmatrix}S_C & 0 \\ 0 & 0\end{bmatrix}\biggr\rVert^2 \nonumber \\
&=\biggl\lVert\begin{bmatrix}A_{11}-S_BX_{11}S_C & A_{12} \\ A_{21} & A_{22}\end{bmatrix}\biggr\rVert^2 \nonumber \\
&=\norm{A_{11}-S_BX_{11}S_C}^2+\norm{A_{12}}^2+\norm{A_{21}}^2+\norm{A_{22}}^2. \label{eq:frame}
\end{align}
Note in particular that $X_{12}$, $X_{21}$, $X_{22}$ do not appear in the final expression and we are free to choose them within whatever constraint we impose on $X$.

\subsection{Generalized rank constrained approximation}\label{sec:rank}

As an illustration, we consider the Friedland--Torokhti rank approximation problem \cite{Fri}
\[
\min_{\rank(X)\le r}{\norm{A-BXC}}.
\]
We may set $X_{12}$, $X_{21}$, $X_{22}$ to be zero matrices in \eqref{eq:frame} and since
\[
\rank(S_BX_{11}S_C)=\rank(X_{11})=\rank(\widetilde{X})=\rank(X),
\]
we only need to solve 
\[
\min_{\rank(S_BX_{11}S_C)\le r}\norm{A_{11}-S_BX_{11}S_C}.
\]
It immediately follows from Eckart--Young Theorem that the solution is $X_{11}=S_B^{-1}U_r \Sigma_r V_r^\tp S_C^{-1}$, with singular value decomposition $A_{11} = U \Sigma V^\tp $ and $U_r \Sigma_r V_r^\tp$ the best rank-$r$ approximation of $A_{11}$. In fact, given that we have set all free parameters in $\widetilde{X}$ to zero, this actually gives the minimum-norm solution. We summarize our solution in Algorithm~\ref{algo:gr}.

\begin{algorithm}[htb]
  \caption{Generalized rank constrained approximation}\label{algo:gr}
  \begin{algorithmic}[1]
    \Require $A\in\mathbb{R}^{m \times n}$, $B\in\mathbb{R}^{m \times p}$, $C\in\mathbb{R}^{q \times n}$, $r\ge 0$;
    \State compute singular value decompositions $B=U_B\begin{bsmallmatrix}\Sigma_B\\ 0\end{bsmallmatrix}V_B^\tp $ and $C=U_C\begin{bmatrix}\Sigma_C& 0\end{bmatrix}V_C^\tp $;
    \State compute $A_{11}$ from
\[
U_B^\tp AV_C = \begin{bmatrix}A_{11} & A_{12} \\ A_{21} & A_{22} \end{bmatrix};
\]
    \State compute singular value decomposition $A_{11} = U \Sigma V^\tp$;
    \State set $S_B=\diag\bigl(\sigma_1(B),\dots,\sigma_s(B)\bigr)$ and $S_C=\diag\bigl(\sigma_1(C),\dots,\sigma_t(C)\bigr)$;
    \Ensure
\[
X=V_B\begin{bmatrix}S_B^{-1} U_r \Sigma_r V_r^\tp S_C^{-1} & 0\\
0 & 0\end{bmatrix}U_C^\tp.
\]
  \end{algorithmic}
\end{algorithm}

\subsection{Generalized prescribed eigenvalue approximation}

A consequence of our previous solution is the solution to the prescribed eigenvalue approximation problem in \ref{it:eig}. This problem requires square matrices, so $A \in \mathbb{R}^{n \times n}$, $B \in \mathbb{R}^{n \times p}$, $C \in \mathbb{R}^{p \times n}$. Let $\lambda(X)$ denote the spectrum of $X \in \mathbb{R}^{p \times p}$ and let $\lambda$ be a given value. We want
\[
\min_{\lambda\in\lambda(X)}{\norm{A-BXC}}.
\]
The special case where $B = C = I$ was famously discussed by Wilkinson  in \cite{Wil}.

Since 
\[
\lambda\in\lambda(X) \quad \Leftrightarrow \quad \rank(X-\lambda I)\le p-1,
\]
we have
\begin{align*}
\min_{\lambda\in\lambda(X)}{\norm{A-BXC}}&=\min_{\lambda\in\lambda(X)}{\norm{A-\lambda BC-B(X-\lambda I)C}}\\
&=\min_{\rank(Y)\le p-1}{\norm{\widetilde{A}-BYC}}
\end{align*}
where $\widetilde{A}=A-\lambda BC$, and the problem reduces to the one in Section~\ref{sec:rank}.

\subsection{Generalized norm constrained approximation}
\label{sec:norm}
The problem in \ref{it:norm}:
\[
\min_{\left\lVert X\right\rVert\le \rho}{\norm{A-BXC}}
\]
is of course a special case of a norm constrained least squares problem if we ignore the fact that the variables $x_{ij}$'s come from a matrix. It may thus be solved using general techniques in \cite[Section~5.3]{Bjo}. The advantage of our approach is that it preserves the matrix structure of the problem so that, for example, we just need to decompose matrices $B$ and $C$ instead of the matrix $C \otimes B$, which is an order of magnitude larger.

By \eqref{eq:frame}, we may set $X_{12}$, $X_{21}$, $X_{22}$ to be zero matrices. Then
\[
\norm{X_{11}}=\norm{X}\le \rho.
\]
If $\norm{S_B^{-1}A_{11}S_C^{-1}}\le \rho$, then the solution is simply $X_{11} =S_B^{-1}A_{11}S_C^{-1}$. So we may suppose that $\norm{S_B^{-1}A_{11}S_C^{-1}}>\rho$ and in which case, the solution must lie on the boundary, i.e., $\norm{X_{11}}=\rho$. To see this, note that if $\norm{X_{11}}<\rho$, then $X_{11} \ne S_B^{-1}A_{11}S_C^{-1}$ since $ \norm{S_B^{-1}A_{11}S_C^{-1}} > \rho$. Hence there exists some $t\in(0,1)$ so that $\norm{t S_B^{-1}A_{11}S_C^{-1} + (1-t)X_{11}}=\rho$ and
\[
\norm{A_{11}-S_B[tS_B^{-1}A_{11}S_C^{-1}+(1-t){X_{11}}]S_C} =(1-t)\norm{A_{11}-S_BX_{11}S_C} < \norm{A_{11}-S_BX_{11}S_C},
\]
contradicting the minimality of $X_{11}$.

So the inequality constraint may be replaced by an equality constraint $\norm{X_{11}} =\rho$ and standard theory of Lagrange multiplier \cite[Chapter~14]{Pro} applied to
\[
\norm{A_{11}-S_BX_{11}S_C}^2+\lambda(\norm{X_{11}}^2-\rho^2)
\]
gives
\begin{equation}\label{eq:5}
\left\{\begin{aligned}
S_B(S_BX_{11}S_C-A_{11})S_C+\lambda X_{11}&=0,\\
\norm{X_{11}}^2&=\rho^2.
\end{aligned}\right.
\end{equation}
If $(X_1, \lambda_1)$ and $(X_2, \lambda_2)$ are both solutions to \eqref{eq:5}, then
\begin{equation}\label{eq:6}
\begin{split}
S_B(S_BX_1S_C-A_{11})S_C+\lambda_1 X_1=0,\\
S_B(S_BX_2S_C-A_{11})S_C+\lambda_2 X_1=0,
\end{split}
\end{equation}
and $\norm{X_1}^2=\norm{X_2}^2=\rho^2$. Left multiply the first equation in \eqref{eq:6} by $X_1^\tp$ and the second by $X_2^\tp$, then take the trace and subtract, we get
\begin{equation}\label{eq:7}
\norm{S_BX_2S_C}^2-\norm{S_BX_1S_C}^2-\bigl(\tr(X_2^\tp S_BA_{11}S_C)-\tr(X_1^\tp S_BA_{11}S_C)\bigr)=\lambda_1\norm{X_1}^2-\lambda_2\norm{X_2}^2.
\end{equation}
Left multiply the first equation in \eqref{eq:6} by $X_2^\tp$ and the second by $X_1^\tp$, then take trace and subtract, we get
\begin{equation}\label{eq:8}
-\bigl(\tr(X_2^\tp S_BA_{11}S_C)-\tr(X_1^\tp S_BA_{11}S_C)\bigr) = -(\lambda_1 -  \lambda_2) \tr(X_1^\tp X_2).
\end{equation}
Adding \eqref{eq:7} and \eqref{eq:8}, and noting that $\norm{X_1}^2=\norm{X_2}^2$, we get
\[
\norm{A_{11}-S_BX_2S_C}-\norm{A_{11}-S_BX_1S_C}=\frac{\lambda_1-\lambda_2}{2}\norm{X_1-X_2}^2.
\]
Hence
\[
\lambda_1 > \lambda_2 \quad \Rightarrow\quad \norm{A_{11}-S_BX_2S_C}>\norm{A_{11}-S_BX_1S_C}.
\]
The smallest $\norm{A_{11}-S_BX_{11}S_C}$ corresponds to the largest $\lambda$. Thus we seek the solution $(X_{11},\lambda)$ to \eqref{eq:5} with the largest $\lambda$.
Let $A_{11}=(a_{ij})$, $X_{11}=(x_{ij})$, $\sigma_{ij}=\sigma_i(B)\sigma_j(C)$. The solution to the first equation \eqref{eq:5} is 
\[
x_{ij}=\frac{a_{ij}}{\sigma_{ij}+\lambda\sigma_{ij}^{-1}}.
\]
Plugging into the second equation in \eqref{eq:5}, we obtain the \emph{secular equation} 
\begin{equation}\label{eq:10}
f(\lambda) \coloneqq \sum_{i=1}^s\sum_{j=1}^t \frac{a_{ij}^2}{(\sigma_{ij}+\lambda\sigma_{ij}^{-1})^2}=\rho^2.
\end{equation}

The secular equation is a ubiquitous univariate nonlinear equation in matrix computation, see \cite[Section~12.1.1]{GVL} or \cite[Section~5.3.3]{Dem} for a discussion of its properties. In particular $f$ has poles at $-\sigma_{ij}^{2}$ and $\lim_{\lambda\rightarrow\pm \infty}f(\lambda)=0$ so  $f(\lambda) = \rho^2$ has only be a finite number of solutions. We may find all real roots of $f(\lambda) = \rho^2$ using any standard univariate root finder, e.g., Newton--Raphson, regula falsi, Brent, etc, and identify the largest root. A well-known trick is to instead apply the root finder to $1/f(\lambda)=1/\rho^2$ as $1/f$ is close to linear in the vicinity of a root and convergence will be extremely fast.
We summarize this solution in Algorithm~\ref{algo:gn}.

\begin{algorithm}[htb]
  \caption{Generalized norm constrained approximation}\label{algo:gn}
  \begin{algorithmic}[1]
    \Require $A\in\mathbb{R}^{m \times n}$, $B\in\mathbb{R}^{m \times p}$, $C\in\mathbb{R}^{q \times n}$, $\rho>0$;
    \State compute singular value decompositions $B=U_B\begin{bsmallmatrix}\Sigma_B\\ 0\end{bsmallmatrix}V_B^\tp $ and $C=U_C\begin{bmatrix}\Sigma_C& 0\end{bmatrix}V_C^\tp $;
    \State  compute $A_{11}$ from
\[
U_B^\tp AV_C = \begin{bmatrix}A_{11} & A_{12} \\ A_{21} & A_{22} \end{bmatrix};
\]
    \State set $S_B=\diag\bigl(\sigma_1(B),\dots,\sigma_s(B)\bigr)$ and $S_C=\diag\bigl(\sigma_1(C),\dots,\sigma_t(C)\bigr)$;
    \State set $\sigma_{ij}=\sigma_i(B)\sigma_j(C)$;
    \State calculate largest root $\lambda$ of $f(\lambda)=\rho^2$;
    \State set $X_{11}$ with entries
\[
x_{ij}=\frac{a_{ij}}{\sigma_{ij}+\lambda\sigma_{ij}^{-1}};
\]
    \Ensure \[X=V_B\begin{bmatrix}X_{11} & 0\\
0 & 0\end{bmatrix}U_C^\tp.
\]
  \end{algorithmic}
\end{algorithm}

\subsection{Generalized two-sided product constrained approximation}

We now consider the problem \ref{it:le}:
\begin{equation}\label{eq:FXH}
\min_{FXG=H}{\norm{A-BXC}}.
\end{equation}
This may be viewed as a least squares counterpart to various \emph{two-sided linear matrix equations} such as
\[
\left\{
\begin{aligned}
BXC &= A,\\
FXG &= H,
\end{aligned}
\right. \qquad \text{or} \qquad BXC + FXG = H,
\]
that have been studied in \cite{Chu1,Mit,Wan}.

There is no loss of generality in assuming that $F$ has full row rank and $G$ has full column rank. Otherwise we may simply take the reduced QR factorizations $F = Q_F R_F$ where $Q_F^\tp Q_F = I$ and $G^\tp = Q_G R_G$ where  $Q_G^\tp Q_G = I$; observe that $FXG = H$ then becomes $R_F X R_G^\tp = Q_F^\tp H Q_G$, i.e., of the form $F'XG' = H'$ where $F' \coloneqq R_F$ has full row rank, $G' \coloneqq R_G^\tp$ has full column rank, and $H' \coloneqq Q_F^\tp H Q_G$.

For a closed-form solution, we will need to assume that  $B$ has full column rank and $C$ has full row rank, i.e., $\rank(B)=p$, $\rank(C)=q$.  Unlike the case of $F$ and $G$, there is a loss of generality in imposing these conditions on $B$ and $C$. However, the case of rank deficient $B$ and $C$ could be easily solved with our iterative algorithm in Section~\ref{sec:iter}.

We start with the simpler version
\begin{equation}\label{eq:linear}
\min_{FXG=H}\norm{A-X}.
\end{equation}
We claim that the solution to  \eqref{eq:linear} is given by $\widehat{X}=A+F^\tp(FF^\tp)^{-1}(H-FAG)(G^\tp G)^{-1}G^\tp$. First observe that
\[
F\widehat{X}G=F\bigl[A+F^\tp(FF^\tp)^{-1}(H-FAG)(G^\tp G)^{-1}G^\tp\bigr]G=FAG+H-FAG=H.
\]
Next recall that if $\lVert \, \cdot \, \rVert_2$ denotes the spectral norm (matrix $2$-norm), then $\norm{YZ}\le \norm{Y}_2\norm{Z}$ for any $Y\in\mathbb{R}^{m \times n}$, $Z = [z_1,\dots,z_p] \in\mathbb{R}^{n \times p}$ as
\begin{align*}
\norm{YZ}^2&=\sum_{i=1}^p\norm{Yz_i}_2^2\le \sum_{i=1}^p\norm{Y}_2^2\norm{z_i}_2^2=\norm{Y}_2^2\biggl[\sum_{i=1}^p\norm{z_i}_2^2\biggr]
=\norm{Y}_2^2\norm{Z}^2.
\end{align*}
Thus for any $X$ satisfying $FXG=H$, we have
\begin{align*}
\norm{A-\widehat{X}}&=\norm{F^\tp(FF^\tp)^{-1}(H-FAG)(G^\tp G)^{-1}G^\tp}\\
&=\norm{F^\tp(FF^\tp)^{-1}F(A-X)G(G^\tp G)^{-1}G^\tp }\\
&\le \norm{F^\tp(FF^\tp)^{-1}F}_2\norm{A-X}\norm{G(G^\tp G)^{-1}G^\tp }_2=\norm{A-X}
\end{align*}
since $F^\tp(FF^\tp)^{-1}F$ and $G(G^\tp G)^{-1}G^\tp$ are orthogonal projectors.

Now for the generalized problem  \eqref{eq:FXH}. Following our notations at the beginning of Section~\ref{sec:frame}, given that we have assumed $\rank(B)=p$ and $\rank(C)=q$, we have
\[
S_B = \Sigma_B, \quad S_C = \Sigma_C, \quad
\widetilde{X}=X_{11} ,\quad
\widetilde{A}=A_{11}.
\]
Let $F_B\coloneqq FV_B\Sigma_B^{-1}$ and $G_C \coloneqq\Sigma_C^{-1}U_C^\tp G$. The constraint $FXG=H$ is then equivalent to
\[
F_B(\Sigma_B\widetilde{X}\Sigma_C)G_C=H.
\]
By the solution to \eqref{eq:linear}, we get
\[
\widetilde{X}=\Sigma_B^{-1}\bigl[ \widetilde{A}+F_B^\tp(F_BF_B^\tp)^{-1}(H-F_B\widetilde{A}G_C)(G_C^\tp G_C)^{-1}G_C^\tp \bigr]\Sigma_C^{-1}.
\]
We summarize this solution in Algorithm~\ref{algo:gl}.

\begin{algorithm}[htb]
  \caption{Generalized two-sided product constrained approximation}\label{algo:gl}
  \begin{algorithmic}[1]
    \Require $A\in\mathbb{R}^{m \times n}$, $B\in\mathbb{R}^{m \times p}$, $C\in\mathbb{R}^{q \times n}$, $H\in\mathbb{R}^{k \times l}$, $F\in\mathbb{R}^{k \times p}$, $G\in\mathbb{R}^{q \times l}$;
    \State compute singular value decompositions $B=U_B\begin{bsmallmatrix}\Sigma_B\\ 0\end{bsmallmatrix}V_B^\tp $ and $C=U_C\begin{bmatrix}\Sigma_C& 0\end{bmatrix}V_C^\tp $;
    \State compute
\[
U_B^\tp AV_C = \widetilde{A};
\]
    \State compute $F_B=FV_B\Sigma_B^{-1}$ and $G_C=\Sigma_C^{-1}U_C^\tp G$;
    \State compute 
\[
\widetilde{X}=\Sigma_B^{-1}\bigl[\widetilde{A}+F_B^\tp(F_BF_B^\tp)^{-1}(H-F_B\widetilde{A}G_C)(G_C^\tp G_C)^{-1}G_C^\tp\bigr]\Sigma_C^{-1};
\]
    \Ensure $X=V_B\widetilde{X}U_C^\tp$.
  \end{algorithmic}
\end{algorithm}

\section{Closed-form solutions via GSVD}\label{sec:symm}

We now address \ref{it:symm}, starting with symmetry constraint:
\begin{equation}\label{eq:symm}
\min_{X=X^\tp }{\norm{A-BXC}}
\end{equation}
and deferring skew-symmetry constraint to later. Since $X$ is necessarily a square matrix, we require the number of columns in $B$ to equal the number of rows in $C$. So let $A\in\mathbb{R}^{m \times p}$, $B\in\mathbb{R}^{m \times n}$, $C\in\mathbb{R}^{n \times p}$. The special case where $C = I$ is called the symmetric Procrustes problem \cite{Brock,Lar} and was solved in \cite{Don,Hig2}. Exact versions of this problem, i.e., seeking symmetric solutions to $BXC  = A$, have been studied in \cite{Chu2,Kha}. The special case $B = C =I$ is a elementary and well-known: the projection of $A$ onto the spaces of symmetric and skew-symmetric matrices are given by the additive decomposition $A = (A+A^\tp)/2+ (A-A^\tp)/2$ into two orthogonal components.

To preserve the symmetry, the singular value decomposition is not useful as $V_B^\tp XU_C$ is generally not symmetric for a symmetric $X$. However, the generalized singular value decomposition  \cite{GSVD} is perfect for our purpose. We remind the reader of this result.
\begin{theorem}[Paige--Saunders]\label{thm:gsvd}
Let $B\in\mathbb{R}^{m \times n}$, $C\in\mathbb{R}^{n \times p}$ with
$k=\rank\bigl(\begin{bsmallmatrix}B\\C^\tp \end{bsmallmatrix}\bigr)$. Then there exist orthogonal matrices $U\in\mathbb{R}^{m \times m}$, $V\in\mathbb{R}^{p \times p}$, $W\in\mathbb{R}^{k \times k}$, and $Q\in\mathbb{R}^{n \times n}$ with
\begin{equation}\label{eq:88}
U^\tp BQ =\Sigma_B\begin{bmatrix}W^\tp R & 0\end{bmatrix}, \qquad V^\tp C^\tp Q =\Sigma_C\begin{bmatrix}W^\tp R & 0\end{bmatrix}
\end{equation}
where
\[
\Sigma_B=\begin{bmatrix}I_B & & \\ & S_B & \\ & & O_B  \end{bmatrix}, \qquad \Sigma_C =\begin{bmatrix}O_C & & \\ & S_C & \\ & & I_C  \end{bmatrix},
\]
and $R\in\mathbb{R}^{k \times k}$ is nonsingular with singular values equal to the nonzero singular values of $\begin{bsmallmatrix}B \\C^\tp \end{bsmallmatrix}$. Here $I_B\in\mathbb{R}^{r \times r}$ and $I_C\in\mathbb{R}^{k-r-s  \times k-r-s}$ are identity matrices, $O_B\in\mathbb{R}^{(m-r-s) \times (k-r-s)}$ and $O_C\in\mathbb{R}^{(p-k+r) \times r}$ are zero matrices with possibly no rows or columns, and $S_B=\diag(\beta_{r+1},\dots,\beta_{r+s})$, $S_C=\diag(\gamma_{r+1},\dots,\gamma_{r+s}) \in \mathbb{R}^{s \times s}$ with
\[
1>\beta_{r+1}\ge \dots\ge \beta_{r+s}>0, \qquad 0<\gamma_{r+1}\le \dots\le \gamma_{r+s}<1.
\]
\end{theorem}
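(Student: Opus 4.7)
The plan is to derive the Paige--Saunders GSVD in three stages: a right-orthogonal rank reveal of the stacked matrix $\bigl[\begin{smallmatrix} B \\ C^\tp \end{smallmatrix}\bigr]$, a thin QR factorization of the resulting trimmed block, and finally the cosine--sine (CS) decomposition applied to the row-partitioned orthonormal factor. The block-diagonal shapes of $\Sigma_B$ and $\Sigma_C$ are produced by the CS decomposition, whereas the nonsingular $R$ and its claimed spectral property come out of the QR step.

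Concretely, let $M \coloneqq \bigl[\begin{smallmatrix} B \\ C^\tp \end{smallmatrix}\bigr] \in \mathbb{R}^{(m+p)\times n}$, which has rank $k$. Choose an orthogonal $Q \in \mathbb{R}^{n \times n}$ whose last $n-k$ columns span $\ker M$ (for instance, obtain $Q$ from an SVD of $M$), so that $MQ = [N,\;0]$ with $N \in \mathbb{R}^{(m+p) \times k}$ of full column rank. Perform a thin QR factorization $N = P R$ with $P \in \mathbb{R}^{(m+p) \times k}$ having orthonormal columns and $R \in \mathbb{R}^{k \times k}$ upper triangular and nonsingular; since $P^\tp P = I_k$, the singular values of $R$ match those of $N$, which in turn are exactly the nonzero singular values of $M$. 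Partition $P = \bigl[\begin{smallmatrix} P_1 \\ P_2 \end{smallmatrix}\bigr]$ with $P_1 \in \mathbb{R}^{m \times k}$ and $P_2 \in \mathbb{R}^{p \times k}$, noting the crucial orthonormality relation $P_1^\tp P_1 + P_2^\tp P_2 = I_k$.

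The third step invokes the CS decomposition on $(P_1,P_2)$: there exist orthogonal $U \in \mathbb{R}^{m \times m}$, $V \in \mathbb{R}^{p \times p}$, and $W \in \mathbb{R}^{k \times k}$ such that $U^\tp P_1 W = \Sigma_B$ and $V^\tp P_2 W = \Sigma_C$ in precisely the block-diagonal forms stated in the theorem, their diagonal entries being cosines and sines and hence automatically satisfying $\beta_{r+i}^2 + \gamma_{r+i}^2 = 1$. Let $Q_1$ denote the first $k$ columns of $Q$. Putting the three steps together,
\[
U^\tp B Q = [U^\tp B Q_1,\; 0] = [U^\tp P_1 R,\; 0] = [\Sigma_B W^\tp R,\; 0] = \Sigma_B [W^\tp R,\; 0],
\]
and analogously $V^\tp C^\tp Q = \Sigma_C [W^\tp R,\; 0]$, giving \eqref{eq:88}.

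The substantive content sits entirely in the CS decomposition, and that is the step I expect to be the main obstacle. One clean route is to start from an SVD $P_1 = U\,\hat{\Sigma}_B\,\widetilde{W}^\tp$ and observe that $P_2^\tp P_2 = I_k - P_1^\tp P_1$ is diagonalized by the same right factor $\widetilde{W}$; the columns of $P_2 \widetilde{W}$ are therefore mutually orthogonal with squared norms $1 - \hat{\sigma}_i^2$, so a suitable normalization and extension produces an orthogonal $V$ with $V^\tp P_2 \widetilde{W} = \hat{\Sigma}_C$. Setting $W = \widetilde{W}$ yields a common right factor. The relation $\hat{\Sigma}_B^\tp \hat{\Sigma}_B + \hat{\Sigma}_C^\tp \hat{\Sigma}_C = I_k$ then partitions the $k$ diagonal indices into three groups --- entries where $P_1$ has singular value $1$ (producing $I_B$ with corresponding zero block in $\Sigma_C$), entries strictly in $(0,1)$ (producing $S_B, S_C$ of size $s$), and entries where $P_1$ has singular value $0$ (producing $O_B$ and $I_C$) --- and a final permutation so that $\beta$'s are nonincreasing and $\gamma$'s nondecreasing in the middle block (which is consistent since $\beta_i^2+\gamma_i^2=1$) delivers the stated form, with dimensional edge cases handled by allowing the appropriate blocks to be empty.
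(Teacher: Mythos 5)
The paper offers no proof of this theorem to compare against: it is stated purely as a reminder of a known result, with the proof delegated entirely to the cited Paige--Saunders reference. Judged on its own merits, your argument is correct, and it is essentially the classical derivation found in that literature: compress $\ker M$, $M \coloneqq \begin{bsmallmatrix} B \\ C^\tp \end{bsmallmatrix}$, with an orthogonal $Q$ so that $MQ = [N,\;0]$; take a thin QR factorization $N = PR$, which is precisely what produces the nonsingular $R$ with singular values equal to those of $N$, hence to the nonzero singular values of $M$; and apply the CS decomposition to the partitioned orthonormal factor $P = \begin{bsmallmatrix} P_1 \\ P_2 \end{bsmallmatrix}$. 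Your inline proof of the CS step is also the standard one and is sound: the right singular factor $\widetilde{W}$ of $P_1 = U\hat{\Sigma}_B\widetilde{W}^\tp$ simultaneously diagonalizes $P_2^\tp P_2 = I_k - P_1^\tp P_1$, so the columns of $P_2\widetilde{W}$ are mutually orthogonal with norms $\gamma_i = (1-\hat{\sigma}_i^2)^{1/2}$, and normalizing the nonzero ones and extending to an orthonormal basis of $\mathbb{R}^p$ yields $V$. Two pieces of bookkeeping that your sketch leaves implicit are worth stating: first, the descending SVD ordering of the $\hat{\sigma}_i$ already makes the three index groups ($\hat{\sigma}_i = 1$, $\hat{\sigma}_i \in (0,1)$, $\hat{\sigma}_i = 0$) contiguous and automatically forces the $\gamma_i$ to be nondecreasing, so the ``final permutation'' you invoke is not actually needed; second, to land $\Sigma_C$ exactly in the stated block form with the $(p-k+r)\times r$ zero block $O_C$ in the top-left corner, the $p-k+r$ extension vectors must be placed as the \emph{leading} columns of $V$, ahead of the normalized columns of $P_2\widetilde{W}$ --- placing them last instead puts the sine blocks in the top rows, which agrees with the theorem only after a row permutation of $V^\tp$. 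Neither point is a gap; your proposal is a complete and correct route to the theorem, and it supplies the self-contained proof that the paper itself chose to omit.
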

Following Theorem~\ref{thm:gsvd}, let
\[
M=Q\begin{bmatrix}R^{-1}W & 0\\0 & I\end{bmatrix}.
\]
Then \eqref{eq:88} becomes
\[
U^\tp BM=\begin{bmatrix}\Sigma_B & 0\end{bmatrix}
, \qquad V^\tp C^\tp M=\begin{bmatrix}\Sigma_C & 0\end{bmatrix}.
\]
Let $\widetilde{A}=U^\tp AV$, $\widetilde{X}=M^{-1}XM^{-\tp}$. Then
\[
\lVert A-BXC\rVert = \bigl\lVert A-U\sb M^{-1}XM^{-\tp}\begin{bmatrix}\Sigma_C & 0\end{bmatrix}^\tp V^\tp \bigr\rVert
=\biggl\lVert \widetilde{A}-\begin{bmatrix}\Sigma_B & 0\end{bmatrix}\widetilde{X}\begin{bmatrix}\Sigma_C^\tp \\0\end{bmatrix} \biggr\rVert.
\]
Partition $\widetilde{A}$ and $\widetilde{X}$ to conform to the block structure of $\Sigma_B$ and $\Sigma_C^\tp $:
\[
\widetilde{A}=\begin{bmatrix}A_{11} & A_{12} & A_{13} \\ A_{21} & A_{22} & A_{23} \\ A_{31} & A_{32} & A_{33}\end{bmatrix},\qquad
\widetilde{X}=\begin{bmatrix}X_{11} & X_{12} & X_{13} & X_{14} \\ X_{21} & X_{22} & X_{23} & X_{24} \\ X_{31} & X_{32} & X_{33} & X_{34} \\ X_{41} & X_{42} & X_{43} & X_{44}\end{bmatrix}.
\]
We get
\[
\widetilde{A}-\begin{bmatrix}\Sigma_B & 0\end{bmatrix}\widetilde{X}\begin{bmatrix}\Sigma_C^\tp \\0\end{bmatrix}
=
\begin{bmatrix}
A_{11} & A_{12}-X_{12}S_C & A_{13}-X_{13}\\
A_{21} & A_{22}-S_BX_{22}S_C & A_{23}-S_BX_{23} \\
A_{31} & A_{32} & A_{33}
\end{bmatrix}
\]
and \eqref{eq:symm} reduces to minimizing
\[
\norm{A_{12}-X_{12}S_C}^2+\norm{A_{13}-X_{13}}^2+\norm{A_{23}-S_BX_{23}}^2+\norm{A_{22}-S_BX_{22}S_C}^2.
\]
Since $\widetilde{X}^\tp =\widetilde{X}$, the solution is easily seen to be
\begin{alignat*}{3}
X_{12}&=A_{12}S_C^{-1},\qquad & X_{13}&=A_{13},\qquad & X_{23}&=S_B^{-1}A_{23},\\
X_{21}&=S_C^{-\tp}A_{12}^\tp,\qquad & X_{31}&=A_{13}^\tp,\qquad& X_{32}&=A_{23}^\tp S_B^{-\tp};
\end{alignat*}
$X_{22}=(x_{ij}) \in \mathbb{R}^{s \times s}$ is given by defining $\sigma_{ij} \coloneqq \beta_{r+i}\gamma_{r+j}$ and setting
\[
x_{ij}=x_{ji}=
\begin{cases}
\dfrac{a_{ij}\sigma_{ij}+a_{ji}\sigma_{ji}}{\sigma_{ij}^2+\sigma_{ji}^2}, & \sigma_{ij}^2+\sigma_{ji}^2\neq 0,\\
0 &\text{otherwise},
\end{cases}
\]
for $i,j=1,\dots,s$. The other blocks $X_{11}, X_{33}, X_{44}, X_{14} = X_{41}^\tp, X_{24} = X_{42}^\tp, X_{34} = X_{43}^\tp$ are all set to be zero matrices. Note that although we have set the free parameters in $X_{22}$ to be zeros, they may be arbitrary as long as $X_{22}^\tp =X_{22}$. We summarize our solution  in Algorithm~\ref{algo:gs}.

\begin{algorithm}[!ht]
  \caption{Generalized symmetry constrained approximation}\label{algo:gs}
  \begin{algorithmic}[1]
    \Require $A\in\mathbb{R}^{m \times n}$, $B\in\mathbb{R}^{m \times p}$, $C\in\mathbb{R}^{q \times n}$, $k\ge 0$;
    \State compute generalized singular value decomposition
\[
U^\tp BQ=\Sigma_B\begin{bmatrix}W^\tp R & 0\end{bmatrix}, \qquad V^\tp C^\tp Q=\Sigma_C\begin{bmatrix}W^\tp R & 0\end{bmatrix}
\]
with $\Sigma_B=\diag(I_B, S_B, O_B)$, $\Sigma_C=\diag(O_C, S_C, I_C)$;
    \State compute
\[
M=Q\begin{bmatrix}R^{-1}W & 0\\0 & I\end{bmatrix};
\]
    \State compute
\[
U^\tp AV =\begin{bmatrix}A_{11} & A_{12} & A_{13} \\ A_{21} & A_{22} & A_{23} \\ A_{31} & A_{32} & A_{33}\end{bmatrix};
\]
    \State compute $X_{22} = (x_{ij})$ as
\[
x_{ij}=x_{ji}=
\begin{cases}
\dfrac{a_{ij}\sigma_{ij}+a_{ji}\sigma_{ji}}{\sigma_{ij}^2+\sigma_{ji}^2}, & \sigma_{ij}^2+\sigma_{ji}^2\neq 0,\\
0 &\text{otherwise};
\end{cases}
\]
    \Ensure \[\widehat{X}=M\begin{bmatrix}0 & A_{12}S_C^{-1} & A_{13} & 0 \\ S_C^{-\tp}A_{12}^\tp  & X_{22} & S_B^{-1}A_{23} & 0 \\ A_{13}^\tp  & A_{23}^\tp S_B^{-\tp} & 0 & 0 \\ 0 & 0 & 0 & 0\end{bmatrix}M^\tp.
\]
  \end{algorithmic}
\end{algorithm}

It is easy to adapt the solution above for skew-symmetric matrices. The only change being that for $\widetilde{X}^\tp =-\widetilde{X}$, we want
\begin{alignat*}{3}
X_{12}&=A_{12}S_C^{-1},\qquad & X_{13}&=A_{13},\qquad & X_{23}&=S_B^{-1}A_{23},\\
X_{21}&=-S_C^{-\tp}A_{12}^\tp,\qquad & X_{31}&=-A_{13}^\tp,\qquad& X_{32}&=-A_{23}^\tp S_B^{-\tp};
\end{alignat*}
$X_{22}=(x_{ij}) \in \mathbb{R}^{s \times s}$ is given by defining $\sigma_{ij} \coloneqq \beta_{r+i}\gamma_{r+j}$ and setting
\[
x_{ij}=-x_{ji}=\begin{cases}\dfrac{a_{ij}\sigma_{ij}-a_{ji}\sigma_{ji}}{\sigma_{ij}^2+\sigma_{ji}^2} & \sigma_{ij}^2+\sigma_{ji}^2\neq 0,\\
0&\text{otherwise},
\end{cases}
\]
for $i,j=1,\dots,s$.

\section{Iterative solution}\label{sec:iter}

The other problems \ref{it:struct} and \ref{it:pos} and the prescribed eigenvector problem take the form
\begin{equation}
\label{eq:pro}
\min_{X\in\mathcal{S}}\;\norm{A-BXC}
\end{equation}
where $\mathcal{S}$ is a closed convex set of matrices having the requisite property. Although we are unable to obtain a closed-form solution for these problems directly, we may solve them by alternating between two subproblems with closed-form solutions:
\begin{enumerate}[\upshape (a)]
\item\label{LS} for any given $A,B,C,Y$ and $\rho$, we have a closed-form solution for the norm constrained problem
\[
\min_{\norm{X-Y}\leq \rho }\norm{A - BXC};
\]

\item\label{Pr} for any $A$, when $B = I$ and $C = I$, we have a closed-form solution for the special case
\[
\min_{Y\in\mathcal{S}}\;\norm{A - Y}.
\]
\end{enumerate}
The problem \eqref{LS} is a minor variant of the problem \ref{it:norm} solved in Section~\ref{sec:norm}, with the $\rho$-ball centered at $Y$ instead of $0$. The problem \eqref{Pr} is a projection onto the set of interest $\mathcal{S}$, which we will solve in Section~\ref{sec:proj} for  \ref{it:struct}, \ref{it:pos}, and the prescribed eigenvector problem.
Sometimes we will have to project twice to different sets  $\mathcal{S}_1$ and $\mathcal{S}_2$; this happens when there are closed-form expressions for projections onto $\mathcal{S}_1$ and $\mathcal{S}_2$ but none for the set of interest $\mathcal{S} = \mathcal{S}_1 \cap \mathcal{S}_2$. An example is when $\mathcal{S}$ is the set of correlation matrices, with $\mathcal{S}_1$ the set of positive semidefinite matrices and $\mathcal{S}_2$ the set of matrices with ones on the diagonal \cite{Hig1}. We will discuss this variant in Section~\ref{sec:repeat}.

Alternating between \eqref{LS} and \eqref{Pr} tradeoffs between minimizing $\norm{A-BXC}$ and staying close to within distance $\rho$ of the projection $Y \in \mathcal{S}$. However if we simply alternate between these two subproblems, the iterates may end up simply oscillating between two fixed points. The well-known solution is to introduce a \emph{Dykstra correction} $Z$ \cite{Dyk} so that we have:
\begin{align}
Y_{k+1}&=\argmin\; \{ \norm{X_k- Z_k - Y} : Y \in \mathcal{S} \},\label{eq:sub1}\\
X_{k+1}&=\argmin\; \{\norm{A-BXC} : \norm{X-Z_k-Y_{k+1}} \leq \rho_k, \; X \in \mathbb{R}^{p \times q}\},\label{eq:sub2}\\
Z_{k+1}&= Z_k-X_{k+1} + Y_{k+1}.\label{eq:sub3}
\end{align}
In Theorems~\ref{thm:conv} and \ref{thm:linconv}, we prove the linear and global convergence of this iterative algorithm to a global minimizer of \eqref{eq:pro}.

We will see in Section~\ref{sec:proj} that \eqref{eq:sub1} is readily solvable for \ref{it:struct}, \ref{it:pos}, and the prescribed eigenvector problem.
If we set  $W=Z_k+Y_{k+1}$, $X'=X-W$, $A'=A-BWC$, then step \eqref{eq:sub2} becomes
\[
\min_{\norm{X'}\leq \rho_k}\;\norm{A'-BX'C},
\]
i.e., it is exactly the norm constrained problem \ref{it:norm} that we solved in Section~\ref{sec:norm}.  For the sequence of $\rho_k$, let $a_{ij}(W)$ denote the $(i,j)$th entry of $A'=A-BWC$. As we discussed in Section~\ref{sec:norm}, for any fixed $W$ there is a bijection between $\rho > 0$ and the largest root $\lambda > 0$ of the secular equation
\begin{equation}\label{eq:10a}
f(\lambda, W) \coloneqq \sum_{i=1}^s\sum_{j=1}^t \frac{a_{ij}(W)^2}{(\sigma_{ij}+\lambda\sigma_{ij}^{-1})^2}=\rho^2.
\end{equation}
We will show in Theorem~\ref{thm:linconv} that when $B$ has full column rank and $C$ has full row rank, choosing
\begin{equation}\label{eq:lambda}
\lambda =  \sigma_{\min}(B)\sigma_{\min}(C)\sigma_{\max}(B)\sigma_{\max}(C)
\end{equation}
gives us the fastest rate of convergence. In this case we have
\[
\rho(W) = \sqrt{f(\sigma_{\min}(B)\sigma_{\min}(C)\sigma_{\max}(B)\sigma_{\max}(C),W)}.
\]
Note that we set $\lambda$ to be a fixed constant but $\rho$ generally depends on $W$. We write $\rho(W)$ to emphasize this dependence; in \eqref{eq:sub2}, $\rho_k = \rho(Z_k + Y_{k+1})$. Nevertheless, like Algorithm~\ref{algo:gn}, $\rho_k$ will not make an appearance in our iterative algorithm, which only requires $\lambda$. Unlike Algorithm~\ref{algo:gn}, our iterative algorithm fixes a value of $\lambda$ at the beginning,  saving us the effort of solving a secular equation.

We summarize the above discussion in Algorithm~\ref{algo:iter}. Aside from lines~4 and 10, the rest of the algorithm is identical to  Algorithm~\ref{algo:gn} but sans the secular equation step. Yet another advantage is that the optimal $\lambda$ in \eqref{eq:lambda} is available to us ``for free'' since the algorithm requires computing the singular value decompositions of $B$ and $C$ to solve \eqref{eq:sub2}.

\begin{algorithm}
  \caption{Iterative algorithm for generalized nearness problems}\label{algo:iter}
  \begin{algorithmic}[1]
    \Require $A\in\mathbb{R}^{m \times n}$, $B\in\mathbb{R}^{m \times p}$, $C\in\mathbb{R}^{q \times n}$;
    \State precompute $B=U_B\begin{bsmallmatrix}\Sigma_B\\ 0\end{bsmallmatrix}V_B^\tp $ and $C=U_C\begin{bmatrix}\Sigma_C& 0\end{bmatrix}V_C^\tp $, $\sigma_{ij}=\sigma_i(B)\sigma_j(C)$;
    \State set $\lambda =  \sigma_{\min}(B)\sigma_{\min}(C)\sigma_{\max}(B)\sigma_{\max}(C)$;
    \State initialize $X_0$, $Y_0$, $Z_0$, $k=0$;
    \State compute $Y_{k+1}$ by projecting $X_k-Z_k$ to the desired set $\mathcal{S}$;
    \State compute $W=Y_{k+1}+Z_k$; 
    \State compute $A'=A-BWC$;
    \State compute $A_{11}$ from
\[
U_B^\tp A' V_C = \begin{bmatrix}A_{11} & A_{12} \\ A_{21} & A_{22} \end{bmatrix};
\]
    \State compute $X_{11} = (x_{ij})$ as
\[x_{ij}=\frac{a_{ij}}{\sigma_{ij}+\lambda\sigma_{ij}^{-1}};
\]
    \State compute \[X_{k+1}=V_B\begin{bmatrix}X_{11} & 0\\
0 & 0\end{bmatrix}U_C^\tp+W;\]
    \State compute $Z_{k+1}=Z_k-X_{k+1}+Y_{k+1}$;
    \State $k=k+1$ and go to line $4$;
  \end{algorithmic}
\end{algorithm}

\subsection{Convergence theorems}\label{sec:conv}

We will now show that the iterates generated by Algorithm~\ref{algo:iter} always converge to the global solution of \eqref{eq:pro} for any initialization and that the convergence rate is linear.
By \eqref{eq:5}, a solution $X_{k+1}$ of \eqref{eq:sub2} satisfies $B^\tp(BX_{k+1}C-A)C^\tp+\lambda(X_{k+1}-Z_k-Y_{k+1})=0$; plugging into \eqref{eq:sub3}, we get
\begin{equation}\label{eq:it1}
\lambda Z_{k+1}= B^\tp(BX_{k+1}C-A)C^\tp.
\end{equation}
A solution $Y_{k+1}$ of \eqref{eq:sub1} clearly satisfies
\[
0 \le \norm{X_k - Z_k - Y}^2 - \norm{X_k - Z_k - Y_{k+1}}^2 = 2 \tr\bigl((Y_{k+1}-X_k+Z_k)^\tp (Y-Y_{k+1})\bigr)  + \norm{Y-Y_{k+1}}^2
\]
for all $Y\in \mathcal{S}$. Thus $\tr\bigl((Y_{k+1}-X_k+Z_k)^\tp (Y-Y_{k+1})\bigr) \ge 0$ for all $Y\in \mathcal{S}$. Plugging into \eqref{eq:sub3}, we get
\begin{equation}\label{eq:it2}
\lambda\tr\bigl((X_k-X_{k+1}-Z_{k+1})^\tp(Y_{k+1}-Y)\bigr) \ge 0
\end{equation}
for all $Y\in \mathcal{S}$.

Let $X_*$ denote a global minimizer of $\norm{A-BXC}^2$ in $\mathcal{S}$. Note that the existence of $X_*$ is guaranteed since $\mathcal{S}$ is closed in all our considered choices and the function $X \mapsto \norm{A-BXC}^2$ has bounded sublevel sets. For any $Y\in \mathcal{S}$, since $X_*$ is a global minimizer we must have
\begin{align}
0 &\le \norm{A-BYC}^2-\norm{A-BX_*C}^2 \nonumber\\
&=2\tr\bigl((B^\tp(BX_*C-A)C^\tp)^\tp (Y-X_*)\bigr)+\norm{B(Y-X_*)C}^2 \label{eq:globmin}
\end{align}
and thus
\[
\tr\bigl((B^\tp(BX_*C-A)C^\tp)^\tp (Y-X_*)\bigr)\ge 0.
\]
Let $Z_*$ be defined by
\begin{equation}\label{eq:Z1}
\lambda Z_* \coloneqq B^\tp(BX_*C-A)C^\tp,
\end{equation}
where we introduce the parameter $\lambda$ for consistency with \eqref{eq:it1}.
Then the last inequality becomes
\begin{equation}\label{eq:Z2}
\lambda\tr\bigl(Z_*^\tp (Y-X_*)\bigr) \ge 0.
\end{equation}
Conversely, if $X_*$ is such that \eqref{eq:Z2} holds for all $Y \in \mathcal{S}$, then $X_*$ must be a global minimizer by virtue of \eqref{eq:globmin}.

\begin{lemma}\label{lem:sd}
The iterate $(X_k,Z_k)$ of Algorithm~\ref{algo:iter} satisfies
\begin{multline}\label{eq:sd}
\norm{X_k-X_*}^2+\norm{Z_k-Z_*}^2-\norm{X_{k+1}-X_*}^2-\norm{Z_{k+1}-Z_*}^2\\
\ge \norm{X_k-X_{k+1}}^2+\norm{Z_k-Z_{k+1}}^2+\frac{2}{\lambda}\sigma_{\min}(B)^2\sigma_{\min}(C)^2\norm{X_k-X_{k+1}}^2\\+\frac{2}{\lambda}\sigma_{\min}(B)^2\sigma_{\min}(C)^2\norm{X_{k+1}-X_*}^2.
\end{multline}
\end{lemma}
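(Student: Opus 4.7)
The plan is to treat this as a standard ``descent with regularization'' lemma for a Dykstra-type method. My starting point is the three-point identity $\norm{a-c}^2-\norm{b-c}^2=\norm{a-b}^2+2\tr((a-b)^\tp(b-c))$, applied with $(a,b,c)=(X_k,X_{k+1},X_*)$ and with $(a,b,c)=(Z_k,Z_{k+1},Z_*)$ and then summed. This rewrites the left-hand side of \eqref{eq:sd} as $\norm{X_k-X_{k+1}}^2+\norm{Z_k-Z_{k+1}}^2+2T_X+2T_Z$, where $T_X=\tr((X_k-X_{k+1})^\tp(X_{k+1}-X_*))$ and $T_Z=\tr((Z_k-Z_{k+1})^\tp(Z_{k+1}-Z_*))$. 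Thus the lemma reduces to proving $T_X+T_Z\ge \lambda^{-1}\sigma_{\min}(B)^2\sigma_{\min}(C)^2\bigl(\norm{X_k-X_{k+1}}^2+\norm{X_{k+1}-X_*}^2\bigr)$.

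To produce a bound on $T_X+T_Z$, I will combine the two variational inequalities already derived in the text. Taking \eqref{eq:it2} with the admissible choice $Y=X_*\in\mathcal{S}$, and \eqref{eq:Z2} with the admissible choice $Y=Y_{k+1}\in\mathcal{S}$ (which lies in $\mathcal{S}$ because it is the projection of $X_k-Z_k$), and then adding the two gives $\tr\bigl((X_k-X_{k+1}-Z_{k+1}+Z_*)^\tp(Y_{k+1}-X_*)\bigr)\ge 0$. The Dykstra update \eqref{eq:sub3} yields $Y_{k+1}-X_{k+1}=Z_{k+1}-Z_k$, and hence $Y_{k+1}-X_*=(Z_{k+1}-Z_k)+(X_{k+1}-X_*)$. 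Expanding the product inside the trace and using $\tr(A^\tp B)=\tr(B^\tp A)$ to collect like terms, I can rearrange to isolate $T_X+T_Z$ and obtain the intermediate bound $T_X+T_Z\ge \tr((X_{k+1}-X_k)^\tp(Z_{k+1}-Z_k))+\tr((X_{k+1}-X_*)^\tp(Z_{k+1}-Z_*))$.

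The last ingredient is the key algebraic observation that both $Z$-differences are actually linear images of the corresponding $X$-differences under the map $M\mapsto B^\tp BMCC^\tp$. Subtracting two consecutive instances of \eqref{eq:it1} gives $\lambda(Z_{k+1}-Z_k)=B^\tp B(X_{k+1}-X_k)CC^\tp$, while \eqref{eq:it1} combined with \eqref{eq:Z1} gives $\lambda(Z_{k+1}-Z_*)=B^\tp B(X_{k+1}-X_*)CC^\tp$. Plugging these into the two traces above and applying cyclicity gives $\lambda\tr((X_{k+1}-X_k)^\tp(Z_{k+1}-Z_k))=\norm{B(X_{k+1}-X_k)C}^2$ and similarly with $X_*$ in place of $X_k$. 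The elementary bound $\norm{BMC}^2\ge \sigma_{\min}(B)^2\sigma_{\min}(C)^2\norm{M}^2$, obtained from $\tr(M^\tp M CC^\tp)\ge \sigma_{\min}(C)^2\tr(M^\tp M)$ and an analogous step for $B$, then delivers the required lower bound on $T_X+T_Z$ and completes the proof.

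The main obstacle I expect is bookkeeping rather than ideas: combining \eqref{eq:it2} and \eqref{eq:Z2} with the correct signs, substituting the Dykstra recursion for $Y_{k+1}$, and then matching the resulting trace expressions (via $\tr(A^\tp B)=\tr(B^\tp A)$ and cyclicity) so that each $Z$-difference can be replaced by $\lambda^{-1}B^\tp B(\cdot)CC^\tp$. A mild subtlety worth flagging is that \eqref{eq:it1} comes from the optimality of the previous subproblem solution, so the companion identity $\lambda Z_k=B^\tp(BX_kC-A)C^\tp$ requires $k\ge 1$; the lemma is therefore understood to apply to iterates after the first full pass of the algorithm, which is standard for this class of schemes.
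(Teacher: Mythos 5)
Your proof is correct and follows essentially the same route as the paper's: the same variational inequalities \eqref{eq:it2} (with $Y=X_*$) and \eqref{eq:Z2} (with $Y=Y_{k+1}$), the same Dykstra substitution from \eqref{eq:sub3}, and the same identities $\lambda(Z_{k+1}-Z_k)=B^\tp B(X_{k+1}-X_k)CC^\tp$ and $\lambda(Z_{k+1}-Z_*)=B^\tp B(X_{k+1}-X_*)CC^\tp$ with the singular-value bound \eqref{eq:sc}; you merely apply the three-point norm identity first and defer both $\sigma_{\min}$ bounds to the very end, whereas the paper interleaves them. Your closing caveat that the companion identity $\lambda Z_k=B^\tp(BX_kC-A)C^\tp$ requires $k\ge 1$ applies equally to the paper's own final step, so it is a shared subtlety rather than a gap in your argument.
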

\begin{proof}
It is easy to see \begin{align}
\label{eq:sc}
\begin{split}
\sigma_{\max}(B)^2\sigma_{\max}(C)^2\norm{X_1-X_2}^2&\ge \tr\bigl[\bigl(B^\tp B(X_1-X_2)CC^\tp\bigr)^\tp\bigl(X_1-X_2\bigr)\bigr],\\
\sigma_{\min}(B)^2\sigma_{\min}(C)^2\norm{X_1-X_2}^2&\le \tr\bigl[\bigl(B^\tp B(X_1-X_2)CC^\tp\bigr)^\tp\bigl(X_1-X_2\bigr)\bigr].
\end{split}
\end{align}
Combining \eqref{eq:Z1}, \eqref{eq:sc}, and \eqref{eq:it1}, we have
\begin{equation}\label{eq:3.1}
\lambda\tr\bigl((X_{k+1}-X_*)^\tp(Z_{k+1}-Z_*)\bigr)\ge \sigma_{\min}(B)^2\sigma_{\min}(C)^2\norm{X_{k+1}-X_*}^2.
\end{equation}
Combining \eqref{eq:Z2} and \eqref{eq:it2}, we have
\begin{equation}
\lambda\tr\bigl((Y_{k+1}-X_*)(X_k-X_{k+1}-Z_{k+1}+Z_*)\bigr)\ge 0\label{eq:3.2}.
\end{equation}
Adding \eqref{eq:3.1} and \eqref{eq:3.2}, and applying \eqref{eq:sub3}, we have
\begin{multline}
\label{eq:26}
\lambda\tr\bigl[\bigl(Z_k-Z_{k+1}\bigr)^\tp\bigl(Z_{k+1}-Z_*-(X_{k}-X_{k+1})\bigr)\bigr]\\+\lambda\tr\bigl((X_{k+1}-X_*)^\tp(X_{k}-X_{k+1})\bigr)
\ge \sigma_{\min}(B)^2\sigma_{\min}(C)^2\norm{X_{k+1}-X_*}^2.
\end{multline}
Dividing \eqref{eq:26} by $\lambda$ and using the identity $\norm{a-c}^2-\norm{b-c}^2=2\tr\bigl((a-c)^\tp(a-b)\bigr)-\norm{a-b}^2$,
\begin{multline}\label{eq:4}
\norm{X_k-X_*}^2+\norm{Z_k-Z_*}^2-\norm{X_{k+1}-X_*}^2-\norm{Z_{k+1}-Z_*}^2
\ge \norm{X_k-X_{k+1}}^2\\+\norm{Z_k-Z_{k+1}}^2+2\tr\bigl((Z_k-Z_{k+1})^\tp (X_k-X_{k+1})\bigr)+\frac{2}{\lambda}\sigma_{\min}(B)^2\sigma_{\min}(C)^2\norm{X_{k+1}-X_*}^2.
\end{multline}
From \eqref{eq:sc} and \eqref{eq:it1}, we have 
\[
\lambda\tr\bigl((Z_k-Z_{k+1})^\tp(X_k-X_{k+1})\bigr)\ge \sigma_{\min}(B)^2\sigma_{\min}(C)^2\norm{X_k-X_{k+1}}^2.
\]
Plug this into \eqref{eq:4} and we obtain the desired result.
\end{proof}

\begin{theorem}[Global convergence]\label{thm:conv}
For any $\lambda>0$, the iterate $(X_k,Z_k)$ of Algorithm~\ref{algo:iter} converges to some $(X_*, Z_*)$ satisfying \eqref{eq:Z1} and \eqref{eq:Z2}.
\end{theorem}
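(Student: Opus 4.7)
The plan is to treat Lemma~\ref{lem:sd} as a Fej\'er-monotonicity engine and extract all convergence statements from it. First, for any pair $(X_*, Z_*)$ satisfying \eqref{eq:Z1} and \eqref{eq:Z2} (whose existence follows from closedness of $\mathcal{S}$ together with coercivity of $X \mapsto \norm{A-BXC}^2$, as noted just before the lemma), I would drop the two non-negative $\sigma_{\min}(B)^2\sigma_{\min}(C)^2$ terms on the right of \eqref{eq:sd} to obtain the simpler inequality
\[
\norm{X_{k+1}-X_*}^2+\norm{Z_{k+1}-Z_*}^2 + \norm{X_k-X_{k+1}}^2+\norm{Z_k-Z_{k+1}}^2 \le \norm{X_k-X_*}^2+\norm{Z_k-Z_*}^2.
\]
Two standard consequences follow: the scalar sequence $\norm{X_k-X_*}^2+\norm{Z_k-Z_*}^2$ is nonincreasing and bounded below by $0$, so the matrix sequence $(X_k, Z_k)$ is bounded; and telescoping yields $\sum_{k\ge 0}\bigl(\norm{X_{k+1}-X_k}^2+\norm{Z_{k+1}-Z_k}^2\bigr)<\infty$, so in particular $X_{k+1}-X_k\to 0$ and $Z_{k+1}-Z_k\to 0$.

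Next, by Bolzano--Weierstrass some subsequence $(X_{k_j}, Z_{k_j})$ converges to a limit $(\bar X, \bar Z)$, and I would verify that this limit itself satisfies the optimality conditions \eqref{eq:Z1} and \eqref{eq:Z2}. For \eqref{eq:Z1}: since $X_{k+1}-X_k \to 0$ we have $X_{k_j+1}\to \bar X$, and similarly $Z_{k_j+1}\to \bar Z$; passing to the limit in \eqref{eq:it1} gives $\lambda \bar Z = B^\tp(B\bar X C - A)C^\tp$. For \eqref{eq:Z2}: rearranging \eqref{eq:sub3} gives $Y_{k+1} = X_{k+1} + Z_{k+1} - Z_k$, so $Y_{k_j+1}\to \bar X$; since each $Y_{k+1}\in \mathcal{S}$ by construction, closedness of $\mathcal{S}$ forces $\bar X\in \mathcal{S}$, and passing to the limit in \eqref{eq:it2} for any fixed $Y\in \mathcal{S}$ yields $\lambda\tr\bigl(\bar Z^\tp(Y-\bar X)\bigr)\ge 0$.

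Finally, to upgrade subsequential convergence to convergence of the whole sequence, I would reapply Lemma~\ref{lem:sd} with the distinguished choice $(X_*, Z_*)=(\bar X, \bar Z)$, which is legitimate precisely because the previous step verified that this pair satisfies the optimality conditions on which the lemma rests. The Fej\'er-monotonicity observation from the first paragraph then specializes to say that $\norm{X_k-\bar X}^2+\norm{Z_k-\bar Z}^2$ is nonincreasing in $k$. Since this non-negative sequence has a subsequence tending to $0$ (along $k_j$), it tends to $0$ overall, giving $(X_k, Z_k)\to (\bar X, \bar Z)$.

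I expect the main obstacle to be this final ``bootstrap'' step: Lemma~\ref{lem:sd} is phrased in terms of an \emph{a priori} fixed optimum $(X_*, Z_*)$, but the theorem statement promises convergence to such a point without prescribing one, so the lemma cannot be applied directly to pin down a unique limit. The resolution is to first use the lemma with \emph{any} optimum only to harvest boundedness and asymptotic regularity, then to identify the specific subsequential limit $(\bar X, \bar Z)$ as itself an optimum, and then to feed this limit back into the lemma as the anchor. A secondary subtlety worth stressing is that the membership $\bar X\in \mathcal{S}$ does not come from the $X_k$'s themselves (which need not lie in $\mathcal{S}$) but from the projections $Y_{k+1}\in \mathcal{S}$ together with $Y_{k_j+1}\to \bar X$ and the closedness of $\mathcal{S}$.
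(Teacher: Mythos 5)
Your proposal is correct and follows essentially the same route as the paper's proof: use Lemma~\ref{lem:sd} for Fej\'er monotonicity and boundedness, extract a subsequential limit, pass to the limit in \eqref{eq:it1} and \eqref{eq:it2} to conclude that the limit satisfies \eqref{eq:Z1} and \eqref{eq:Z2}, then re-anchor the lemma at that limit to upgrade subsequential convergence to convergence of the whole sequence. If anything, your write-up is more careful than the paper's: the telescoping argument giving $X_{k+1}-X_k\to 0$ and $Z_{k+1}-Z_k\to 0$ (needed to pass to the limit in \eqref{eq:it2}, which couples indices $k$ and $k+1$) and the verification that $\bar X\in\mathcal{S}$ via $Y_{k_j+1}\to\bar X$ and closedness are both left implicit in the paper.
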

\begin{proof}
By \eqref{eq:sd}, the sequence $(\norm{X_k-X_*}^2+\norm{Z_k-Z_*}^2)_{k =0}^\infty$ is monotone decreasing and thus convergent. This also implies $(X_k)_{k =0}^\infty$ and $(Z_k)_{k =0}^\infty$ are bounded, and therefore have convergent subsequences $(X_{k_j})_{j =0}^\infty$ with limit $\widetilde{X}$ and $(Z_{k_j})_{j =0}^\infty$ with limit $\widetilde{Z}$.
Taking limits in \eqref{eq:it1} and \eqref{eq:it2} over these subsequences, we obtain
\[
\lambda \widetilde{Z} =B^\tp(B\widetilde{X}C-A)C^\tp, \qquad \lambda\tr\bigl(\widetilde{Z}^\tp(Y-\widetilde{X})\bigr) \ge 0
\]
for all $Y\in \mathcal{S}$.
Thus $(\widetilde{X}, \widetilde{Z})$ satisfies \eqref{eq:Z1} and \eqref{eq:Z2}, implying that $\widetilde{X}$ is a global minimizer. Let $X_*=\widetilde{X}$ and $Z_*=\widetilde{Z}$. Since  $(\norm{X_k-X_*}^2+\norm{Z_k-Z_*}^2)_{k =0}^\infty$ is convergent, we must have $X_k\rightarrow X_*$ and $Z_k\rightarrow Z_*$.
\end{proof}

\begin{theorem}[Linear convergence]\label{thm:linconv}
Suppose $B$ has full column rank and $C$ has full row rank. Then \[\norm{X_k-X_*}^2+\norm{Z_k-Z_*}^2\ge (1+\delta)(\norm{X_{k+1}-X_*}^2+\norm{Z_{k+1}-Z_*}^2)\]
with 
\[
\delta=\biggl[\frac{\lambda}{2\sigma_{\min}(B)^2\sigma_{\min}(C)^2}+\frac{\sigma_{\max}(B)^2\sigma_{\max}(C)^2}{2\lambda}\biggr]^{-1}.
\]
In particular, choosing
\[
\lambda=\sigma_{\min}(B)\sigma_{\min}(C)\sigma_{\max}(B)\sigma_{\max}(C)
\]
maximizes the convergence rate with
\[
\delta =\frac{\sigma_{\min}(B)\sigma_{\min}(C)}{\sigma_{\max}( B)\sigma_{\max}(C)} = \frac{1}{\kappa_2(B)\kappa_2(C)}.
\]
\end{theorem}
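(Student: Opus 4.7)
The plan is to sharpen Lemma~\ref{lem:sd} by keeping $\norm{B(X_{k+1}-X_*)C}^2$ intact throughout its proof, and then balance two lower bounds for this quantity via a convex combination. First, the trace identity $\lambda\tr\bigl((X_{k+1}-X_*)^\tp(Z_{k+1}-Z_*)\bigr) = \norm{B(X_{k+1}-X_*)C}^2$ follows from \eqref{eq:Z1}, \eqref{eq:it1}, and cyclicity of the trace. Step \eqref{eq:3.1} in the proof of Lemma~\ref{lem:sd} immediately discards this identity in favor of the looser lower bound $\sigma_{\min}(B)^2\sigma_{\min}(C)^2\norm{X_{k+1}-X_*}^2$; reading that argument without this slackening instead yields
\begin{equation*}
V_k - V_{k+1} \ge \norm{X_k-X_{k+1}}^2 + \norm{Z_k-Z_{k+1}}^2 + \tfrac{2}{\lambda}\norm{B(X_k-X_{k+1})C}^2 + \tfrac{2}{\lambda}\norm{B(X_{k+1}-X_*)C}^2,
\end{equation*}
where $V_k \coloneqq \norm{X_k-X_*}^2 + \norm{Z_k-Z_*}^2$.

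I would then bound $\norm{B(X_{k+1}-X_*)C}^2$ from two sides simultaneously. On one hand, \eqref{eq:sc} gives $\norm{B(X_{k+1}-X_*)C}^2 \ge \sigma_{\min}(B)^2\sigma_{\min}(C)^2\norm{X_{k+1}-X_*}^2$. On the other, factoring $\lambda(Z_{k+1}-Z_*) = B^\tp[B(X_{k+1}-X_*)C]C^\tp$ and applying the spectral--Frobenius submultiplicative inequality to the outer factors yields $\norm{B(X_{k+1}-X_*)C}^2 \ge (\lambda^2/(\sigma_{\max}(B)^2\sigma_{\max}(C)^2))\norm{Z_{k+1}-Z_*}^2$. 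For any $\theta \in [0,1]$, taking their convex combination, substituting into the sharpened inequality, and discarding the three other non-negative terms produces
\begin{equation*}
V_k - V_{k+1} \ge \theta\frac{2\sigma_{\min}(B)^2\sigma_{\min}(C)^2}{\lambda}\norm{X_{k+1}-X_*}^2 + (1-\theta)\frac{2\lambda}{\sigma_{\max}(B)^2\sigma_{\max}(C)^2}\norm{Z_{k+1}-Z_*}^2.
\end{equation*}

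To secure $V_k - V_{k+1} \ge \delta V_{k+1}$, equate both coefficients to $\delta$: this forces $\theta = \delta\lambda/(2\sigma_{\min}(B)^2\sigma_{\min}(C)^2)$ and $1-\theta = \delta\sigma_{\max}(B)^2\sigma_{\max}(C)^2/(2\lambda)$, and summing and solving delivers the stated closed-form for $\delta$. The optimal $\lambda$ then follows from AM--GM on the two summands in $\delta^{-1}$: equality $\lambda/(2\sigma_{\min}(B)^2\sigma_{\min}(C)^2) = \sigma_{\max}(B)^2\sigma_{\max}(C)^2/(2\lambda)$ pins down $\lambda = \sigma_{\min}(B)\sigma_{\min}(C)\sigma_{\max}(B)\sigma_{\max}(C)$ and gives $\delta = 1/(\kappa_2(B)\kappa_2(C))$.

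The main obstacle is that Lemma~\ref{lem:sd} as stated is too weak for this theorem: applied verbatim with the crude bound $\norm{Z_{k+1}-Z_*}^2 \le (\sigma_{\max}(B)^4\sigma_{\max}(C)^4/\lambda^2)\norm{X_{k+1}-X_*}^2$, it yields only $\delta \le 2\sigma_{\min}(B)^2\sigma_{\min}(C)^2\lambda/(\lambda^2 + \sigma_{\max}(B)^4\sigma_{\max}(C)^4)$, hence at best $\delta = 1/(\kappa_2(B)\kappa_2(C))^2$, not the square-root version claimed. Retrieving the correct rate therefore requires reopening the Lemma's proof, preserving the matrix-weighted quantity $\norm{B(X_{k+1}-X_*)C}^2$, and exploiting its dual role as a lower bound for both $\norm{X_{k+1}-X_*}^2$ and $\norm{Z_{k+1}-Z_*}^2$.
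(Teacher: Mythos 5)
Your proposal is correct and is essentially the paper's own argument: your two lower bounds on $\norm{B(X_{k+1}-X_*)C}^2 = \lambda\tr\bigl((X_{k+1}-X_*)^\tp(Z_{k+1}-Z_*)\bigr)$ are precisely the paper's \eqref{eq:3.1} and \eqref{eq:3.1a}, and your convex combination with weight $\theta$, re-run through the proof of Lemma~\ref{lem:sd} and balanced by equating the two coefficients to $\delta$, is exactly how the paper obtains the stated $\delta$ and the AM--GM-optimal $\lambda$. Your closing observation --- that invoking Lemma~\ref{lem:sd} as a black box only yields the squared rate $1/(\kappa_2(B)\kappa_2(C))^2$, so the lemma's proof must be reopened --- is accurate and matches why the paper substitutes \eqref{eq:convex} for \eqref{eq:3.1} inside that derivation rather than citing the lemma directly.
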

\begin{proof}
If $B$ has full column rank and $C$ has full row rank, then 
\[
\sigma_{\max}(B)^2\sigma_{\max}(C)^2\ge \sigma_{\min}(B)^2\sigma_{\min}(C)^2>0.
\]
Since for any $X_1,X_2\in\mathbb{R}^{m \times n}$,
\[
\norm{B^\tp B(X_1-X_2)CC^\tp}^2\le \sigma_{\max}(B)^2\sigma_{\max}(C)^2\tr\bigl[\bigl(B^\tp B(X_1-X_2)CC^\tp\bigr)^\tp (X_1-X_2)\bigr],
\]
we have
\begin{equation}\label{eq:3.1a}
\lambda\tr\bigl((X_{k+1}-X_*)^\tp(Z_{k+1}-Z_*)\bigr)\ge \dfrac{1}{\sigma_{\max}(B)^2\sigma_{\max}(C)^2}\norm{\lambda Z_{k+1}-\lambda Z_*}^2.
\end{equation}
Taking a convex combination of \eqref{eq:3.1} and \eqref{eq:3.1a}, we get
\begin{multline}\label{eq:convex}
\lambda\tr\bigl((X_{k+1}-X_*)^\tp(Z_{k+1}-Z_*)\bigr)\ge t\sigma_{\min}(B)^2\sigma_{\min}(C)^2\norm{X_{k+1}-X_*}^2 \\+(1-t)\dfrac{1}{\sigma_{\max}(B)^2\sigma_{\max}(C)^2}\norm{\lambda Z_{k+1}-\lambda Z_*}^2
\end{multline}
for any $t\in[0,1]$. When we derived \eqref{eq:sd}, the term $2\sigma_{\min}(B)^2\sigma_{\min}(C)^2\norm{X_{k+1}-X_*}^2/\lambda$ came from \eqref{eq:3.1}. If we use \eqref{eq:convex} in place of \eqref{eq:3.1} in our derivation, we obtain
\begin{multline*}
\norm{X_k-X_*}^2+\norm{Z_k-Z_*}^2-\norm{X_{k+1}-X_*}^2-\norm{Z_{k+1}-Z_*}^2\\
\ge \norm{X_k-X_{k+1}}^2+\norm{Z_k-Z_{k+1}}^2+\frac{2}{\lambda}\sigma_{\min}(B)^2\sigma_{\min}(C)^2\norm{X_{k}-X_{k+1}}^2\\
+t\frac{2}{\lambda}\sigma_{\min}(B)^2\sigma_{\min}(C)^2\norm{X_{k+1}-X_*}^2 \\
+(1-t)\dfrac{2}{\lambda \sigma_{\max}(B)^2\sigma_{\max}(C)^2}\norm{\lambda Z_{k+1}-\lambda Z_*}^2.
\end{multline*}
Now drop the nonnegative term $2\sigma_{\min}(B)^2\sigma_{\min}(C)^2\norm{X_{k}-X_{k+1}}^2/\lambda$ and combine the last two terms to get $\delta(\norm{X_{k+1}-X_*}^2+\norm{Z_{k+1}-Z_*})$ with
\[
t = \frac{\lambda^2}{1+\sigma_{\min}(B)^2\sigma_{\min}(C)^2\sigma_{\max}(B)^2\sigma_{\max}(C)^2}. \qedhere
\]
\end{proof}

\begin{corollary}\label{cor:rate}
The iterate $(X_k,Z_k)$ of Algorithm~\ref{algo:iter} satisfies
\[
\norm{X_k-X_{k+1}}^2+\norm{Z_{k}-Z_{k+1}}^2\le \frac{\norm{X_0-X_*}^2+\norm{Z_0-Z_*}^2}{k+1}.
\]
\end{corollary}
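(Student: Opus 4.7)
The plan is to combine a telescoping bound extracted from Lemma~\ref{lem:sd} with a monotonicity property of the successive-iterate gap
\[
a_k \coloneqq \norm{X_k-X_{k+1}}^2+\norm{Z_k-Z_{k+1}}^2.
\]
Granting both, the corollary is immediate from the averaging inequality $(k+1)a_k\le\sum_{j=0}^{k}a_j\le\norm{X_0-X_*}^2+\norm{Z_0-Z_*}^2$.

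The telescoping half is routine: I would drop the two nonnegative terms involving $\sigma_{\min}(B)^2\sigma_{\min}(C)^2$ from \eqref{eq:sd} to get the cleaner inequality
\[
\norm{X_k-X_*}^2+\norm{Z_k-Z_*}^2-\norm{X_{k+1}-X_*}^2-\norm{Z_{k+1}-Z_*}^2\ge a_k,
\]
and then sum from $0$ to $k$, noting $\norm{X_{k+1}-X_*}^2+\norm{Z_{k+1}-Z_*}^2\ge 0$, which yields $\sum_{j=0}^{k}a_j\le \norm{X_0-X_*}^2+\norm{Z_0-Z_*}^2$.

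The real work is in proving monotonicity $a_{k+1}\le a_k$, and this is where I expect the main obstacle. The plan is to use \eqref{eq:it1} and \eqref{eq:it2} at two consecutive iterations. Subtracting \eqref{eq:it1} at steps $k$ and $k+1$ gives $\lambda(Z_{k+1}-Z_k)=L(X_{k+1}-X_k)$, where $L(X)\coloneqq B^\tp BXCC^\tp$ is self-adjoint positive semidefinite with respect to the Frobenius inner product. Then I would evaluate \eqref{eq:it2} at step $k$ with the test matrix $Y=Y_{k+2}$ and at step $k+1$ with $Y=Y_{k+1}$, sum the two inequalities, and use \eqref{eq:sub3} to rewrite $Y_{k+1}$ and $Y_{k+2}$ in terms of the $X$- and $Z$-iterates. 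After substituting $\Delta Z_j=L(\Delta X_j)/\lambda$, the resulting bilinear form in $\Delta X_k,\Delta X_{k+1}$ reduces, via Cauchy--Schwarz and AM-GM applied to the cross term $\langle(I+L/\lambda)\Delta X_k,(I+L/\lambda)\Delta X_{k+1}\rangle$ (valid because $I+L/\lambda$ is self-adjoint positive definite), to exactly the inequality $a_k\ge a_{k+1}+2\langle\Delta X_{k+1},L(\Delta X_{k+1})\rangle/\lambda\ge a_{k+1}$. The base case $a_1\le a_0$ needs separate care since \eqref{eq:it1} does not a priori relate the initial $Z_0$ to $X_0$; the cleanest workaround is to note that the identity $\lambda\Delta Z_k=L(\Delta X_k)$ used in the argument first kicks in at $k=1$, so one either imposes the matching initialization $\lambda Z_0=B^\tp(BX_0C-A)C^\tp$ (which is natural and cost-free) or else absorbs the first term into the constant.
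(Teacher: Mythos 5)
Your plan is in essence the paper's plan---monotonicity of the gap $a_k \coloneqq \norm{X_k-X_{k+1}}^2+\norm{Z_k-Z_{k+1}}^2$, a telescoped form of \eqref{eq:sd}, and the averaging step---and your Cauchy--Schwarz/AM--GM reduction is correct where its hypotheses hold: writing $M \coloneqq L/\lambda$ and $P \coloneqq I+M$, the two summed instances of \eqref{eq:it2} become, after your substitutions, $\langle P\Delta X_k, P\Delta X_{k+1}\rangle \ge \norm{P\Delta X_{k+1}}^2 + \langle \Delta X_k, M\Delta X_k\rangle$, and bounding the left side by $\tfrac{1}{2}\norm{P\Delta X_k}^2+\tfrac{1}{2}\norm{P\Delta X_{k+1}}^2$ gives exactly your inequality $a_k \ge a_{k+1} + \tfrac{2}{\lambda}\langle \Delta X_{k+1}, L(\Delta X_{k+1})\rangle \ge a_{k+1}$. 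So for $k\ge 1$ your argument is sound.

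The genuine gap is the base case you flagged, and neither of your patches closes it in the form required. Imposing $\lambda Z_0=B^\tp(BX_0C-A)C^\tp$ proves the corollary only for one special initialization, whereas Algorithm~\ref{algo:iter} (and hence the statement) allows $X_0,Y_0,Z_0$ arbitrary; and ``absorbing the first term into the constant'' only yields $k\,a_k \le \sum_{j=1}^{k}a_j \le \norm{X_0-X_*}^2+\norm{Z_0-Z_*}^2$, i.e.\ a $1/k$ bound rather than the stated $1/(k+1)$ bound. The missing idea is that the obstruction is an artifact of eliminating $\Delta Z_k$: the relation $\lambda\Delta Z_k = L(\Delta X_k)$ is never needed. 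The paper invokes \eqref{eq:it1} only once, through $\lambda(Z_{k+1}-Z_{k+2})=B^\tp B(X_{k+1}-X_{k+2})CC^\tp$, which together with \eqref{eq:sc} gives $\tr\bigl((Z_{k+1}-Z_{k+2})^\tp(X_{k+1}-X_{k+2})\bigr)\ge 0$; the quantity $Z_k-Z_{k+1}$ is left untouched and is controlled only through the subtracted pair of inequalities \eqref{eq:it2}, which are valid at $k=0$ because they come from the projection step and \eqref{eq:sub3}, not from any relation between $Z_0$ and $X_0$. These two ingredients combine purely algebraically into
\[
a_k - a_{k+1} \ge \norm{(X_k-2X_{k+1}+X_{k+2})+(Z_k-2Z_{k+1}+Z_{k+2})}^2 \ge 0
\]
for every $k\ge 0$ and every initialization, which is what delivers the $1/(k+1)$ constant. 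Concretely: keep your summed variational inequality, substitute only $\Delta Z_{k+1}=L(\Delta X_{k+1})/\lambda$ (equivalently, use just $\langle \Delta Z_{k+1},\Delta X_{k+1}\rangle\ge 0$), and leave $\Delta Z_k$ alone; your argument then closes unconditionally.
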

\begin{proof}
By subtracting two successive inequalities of the form \eqref{eq:it2}, we get
\[
\tr\bigl((Z_{k+1}+X_{k+1}-X_{k}-Z_{k+2}-X_{k+2}+X_{k+1})^\tp(Y_{k+2}-Y_{k+1})\bigr)\ge 0.
\]
Adding this to $2\lambda\tr\bigl((Z_{k+1}-Z_{k+2})^\tp(X_{k+1}-X_{k+2})\bigr)\ge 0$ gives
\begin{multline*}
\tr\bigl((X_{k+1}-X_{k+2})^\tp(X_{k}-2X_{k+1}+X_{k+2})\bigr)\\+
\tr\bigl((Z_{k+1}+X_{k+1}-X_{k}-Z_{k+2}-X_{k+2}+X_{k+1})^\tp(X_{k+1}-X_{k+2})\bigr)\ge 0,
\end{multline*}
It follows from \eqref{eq:sub3} that
\[
Z_{k}+Y_{k+1}-X_{k}-Z_{k+1}-Y_{k+2}+X_{k+1}=Z_{k+1}+X_{k+1}-X_{k}-Z_{k+2}-X_{k+2}+X_{k+1}.
\]
So the last inequality becomes
\begin{multline}\label{eq:5.7}
\tr\bigl((X_{k+1}-X_{k+2})^\tp(X_{k}-2X_{k+1}+X_{k+2})\bigr)\\+\tr\bigl((Z_{k}+Y_{k+1}-X_{k}-Z_{k+1}-Y_{k+2}+X_{k+1})^\tp(X_{k+1}-Y_{k+1}-X_{k+2}+Y_{k+2})\bigr)\ge 0.
\end{multline}
Adding
\[
\norm{X_{k}-2X_{k+1}+X_{k+2}}^2+\tr\bigl((X_{k}-Y_{k+1}-X_{k+1}+Y_{k+2})^\tp(X_{k+1}-Y_{k+1}-X_{k+2}+Y_{k+2})\bigr)
\]
to both sides of \eqref{eq:5.7}, we get
\begin{multline*}
\tr\bigl((X_k-X_{k+1})^\tp(X_{k}-2X_{k+1}+X_{k+2})\bigr)+\tr\bigl((Z_k-Z_{k+1})^\tp(Z_{k}-2Z_{k+1}+Z_{k+2})\bigr)\\
\ge \norm{X_{k}-2X_{k+1}+X_{k+2}}^2+\tr\bigl((X_{k}-Y_{k+1}-X_{k+1}+Y_{k+2})^\tp(X_{k+1}-Y_{k+1}-X_{k+2}+Y_{k+2})\bigr);
\end{multline*}
and with this inequality, we see that
\begin{align*}
\norm{X_k&-X_{k+1}}^2+\norm{Z_k-Z_{k+1}}^2-\norm{X_{k+1}-X_{k+2}}^2-\norm{Z_{k+1}-Z_{k+2}}^2\\
&=\begin{multlined}[t]
2\tr\bigl((X_k-X_{k+1})^\tp(X_{k}-2X_{k+1}+X_{k+2})\bigr)\\
+2\tr\bigl((Z_k-Z_{k+1})^\tp(Z_{k}-2Z_{k+1}+Z_{k+2})\bigr)\\
\qquad\qquad-\norm{X_k-2X_{k+1}+X_{k+2}}^2-\norm{Z_{k}-2Z_{k+1}+Z_{k+2}}^2
\end{multlined}\\
&\ge \begin{multlined}[t]
\norm{X_{k}-2X_{k+1}+X_{k+2}}^2\\
+2\tr\bigl((X_{k}-Y_{k+1}-X_{k+1}+Y_{k+2})^\tp(X_{k+1}-Y_{k+1}-X_{k+2}+Y_{k+2})\bigr)\\
-\norm{X_k-2X_{k+1}+X_{k+2}}^2-\norm{Z_{k}-2Z_{k+1}+Z_{k+2}}^2
\end{multlined}\\
&\ge \begin{multlined}[t]
2\norm{X_{k}-2X_{k+1}+X_{k+2}}^2\\
+2\tr\bigl((X_k-2X_{k+1}+X_{k+2}+Z_{k}-2Z_{k+1}+Z_{k+2})^\tp(Z_{k}-2Z_{k+1}+Z_{k+2})\bigr)\\
-\norm{X_k-2X_{k+1}+X_{k+2}}^2-\norm{Z_{k}-2Z_{k+1}+Z_{k+2}}^2
\end{multlined}\\
&=\norm{X_k-2X_{k+1}+X_{k+2}+Z_{k}-2Z_{k+1}+Z_{k+2}}^2\ge 0.
\end{align*}
It follows that the sequence $(\norm{X_k-X_{k+1}}^2+\norm{Z_{k}-Z_{k+1}}^2)_{k=0}^\infty$ is monotone decreasing. Hence
\begin{align*}
(k+1)(\norm{X_k-X_{k+1}}^2+\norm{Z_{k}-Z_{k+1}}^2) &\le \sum_{k=0}^\infty(\norm{X_k-X_{k+1}}^2+\norm{Z_{k}-Z_{k+1}}^2)\\
&\le \norm{X_0-X_*}^2+\norm{Z_0-Z_*}^2
\end{align*}
where the inequality follows from \eqref{eq:sd}.
\end{proof}

\subsection{Repeated projections}\label{sec:repeat}

There are occasions when we do not have a single closed-form solution for a projection onto the desired set $\mathcal{S} = \mathcal{S}_1 \cap \mathcal{S}_2$ but we do have closed-form solutions for a projections onto $\mathcal{S}_1$ and $\mathcal{S}_2$. Of course, one may then obtain an iterative method for projection onto $\mathcal{S}$ simply by alternating between projections onto $\mathcal{S}_1$ and $\mathcal{S}_2$ \cite{Dyk}. Nevertheless, in situations like this, standard wisdom from the design of iterative algorithms informs us that it would be better to intertwine these inexpensive projections onto $\mathcal{S}_1$ and $\mathcal{S}_2$ with other steps of Algorithm~\ref{algo:iter} --- instead of projecting onto $\mathcal{S}$, an expensive endeavor requiring a separate iterative procedure, in every iteration of Algorithm~\ref{algo:iter}. With this in mind, we obtain the following ``two projections variant'' of Algorithm~\ref{algo:iter}:
\begin{equation}\label{eq:iter2}
\begin{aligned}
X_{k+1}&=\argmin\; \{ \norm{A-BXC}^2+\lambda\norm{X-W_k+Z_k}^2 : X \in \mathbb{R}^{p \times q} \},\\
Y_{k+1}&=\argmin\; \{\norm{ W_k - Z'_k - Y} : Y \in \mathcal{D} \},\\
W_{k+1}&=\argmin\; \{\norm{X_{k+1}+Y_{k+1}+Z_k+Z'_k)/2 - W} : W \in \mathcal{S} \},\\
Z_{k+1}&=Z_k+X_{k+1}-W_{k+1},\\
Z'_{k+1}&=Z'_k+Y_{k+1}-W_{k+1}.
\end{aligned}
\end{equation}
It is straightforward to extend this to include three (or more) projections when we have $\mathcal{S} = \mathcal{S}_1 \cap \mathcal{S}_2 \cap \mathcal{S}_3$.

The convergence results in Section~\ref{sec:conv} may also be adapted to \eqref{eq:iter2}. To account for the fact that we now have two Dykstra corrections $Z_k$ and $Z'_k$, we replace \eqref{eq:it1} and \eqref{eq:it2} by
\begin{align*}
\lambda (W_k-W_{k+1}-Z_{k+1}) &= B^\tp(BX_{k+1}C-A)C^\tp,\\
\lambda\tr\bigl((W_k-W_{k+1}-Z'_{k+1})^\tp(Y_{k+1}-Y)\bigr) &\ge 0,\\
\lambda\tr\bigl((Z_{k+1}+Z'_{k+1})^\tp(W_{k+1}-W)\bigr) &\ge 0
\end{align*}
for all $Y\in \mathcal{S}_1$ and $ W\in \mathcal{S}_2$. It is straightforward to check that with this modification the proofs of Lemma~\ref{lem:sd}, Theorems~\ref{thm:conv} and \ref{thm:linconv}, Corollary~\ref{cor:rate} carry through for the algorithm in \eqref{eq:iter2}.

\section{Computing projections}\label{sec:proj}

We rely on Algorithm~\ref{algo:iter} for the generalized nearness problems \ref{it:struct}, \ref{it:pos}, and the prescribed eigenvector problem. Since the algorithm alternates between projection and norm constrained least squares, it remains to discuss how we may compute a projection, i.e.,
\[
\min_{X\in\mathcal{S}}\;\norm{A - X}
\]
for the relevant sets $\mathcal{S}$. We first reminder readers that that the projection of $A \in \mathbb{R}^{n \times n}$ to the subspace of symmetric matrices $\mathbb{S}^n \coloneqq  \{X\in\mathbb{R}^{n \times n} : X = X^\tp\}$ is given by $X = (A + A^\tp)/2$, a fact that we will use liberally below.

For \ref{it:struct}, $\mathcal{S}$ is the subspace of either Toeplitz, Hankel, or circulant matrices \cite{Pan}:
\begin{align*}
\operatorname{Toep}_n(\mathbb{R}) &\coloneqq \{X\in\mathbb{R}^{n \times n} :
x_{i,i+k} = x_{j, j+k}, \; -n+1 \le k \le n-1, \; 1 \le k+i, k+j, i,j \le n \},\\
\operatorname{Hank}_n(\mathbb{R}) &\coloneqq \{X\in\mathbb{R}^{n \times n} : x_{i,k-i} = x_{j, k-j}, \; 2 \le k \le 2n, \; 1 \le k-i, k-j, i,j \le n \},\\
\operatorname{Circ}_n(\mathbb{R}) &\coloneqq \{X\in\mathbb{R}^{n \times n} :
x_{ij} = x_{kl},\; i-j \equiv k-l \bmod n\}.
\end{align*}
The projections of $A\in\mathbb{R}^{n \times n}$ onto $\operatorname{Toep}_n(\mathbb{R})$, $\operatorname{Hank}_n(\mathbb{R})$, $\operatorname{Circ}_n(\mathbb{R})$ are then given respectively by
\[
x_{ij}=\begin{cases}
\displaystyle\frac{1}{n-\abs{i-j}}\sum_{k-l=i-j} a_{kl} & \text{(Toeplitz)}, \\[6ex]
\displaystyle\frac{1}{n-\abs{i+j-n-1}}\sum_{k+l=i+j} a_{kl}  & \text{(Hankel)}, \\[6ex]
\displaystyle\frac{1}{n}\smash[b]{\sum_{k-l \equiv i-j \bmod{n}}} a_{kl}  & \text{(circulant)},
\end{cases}
\]
for $i,j = 1,\dots,n$.

For the prescribed eigenvector problem, given any nonzero $v \in \mathbb{R}^n$, we write
\[
\mathbb{S}^n_v \coloneqq \{X\in\mathbb{S}^{n} : Xv=\lambda v \;\text{for some}\; \lambda \in \mathbb{R}\}.
\]
For any given $A \in\mathbb{R}^{n \times n}$ and nonzero $v \in \mathbb{R}^n$, we may assume $\norm{v}_2=1$, and if not, just normalize. Let $V\in\mathbb{R}^{n \times n}$ be an orthogonal matrix whose first column is $v$. Let $e = [1,0,\dots,0]^\tp \in \mathbb{R}^n$. Then as $Xv=\lambda v$ iff $V^\tp XVe=\lambda e$, the projection problem reduces to
\[
\min_{X\in\mathbb{S}^n_v}{\norm{A-X}} =\min_{\widetilde{X}\in\mathbb{S}^n_{e}}{\norm{V^\tp AV-\widetilde{X}}}.
\]
With this observation, a projection of $A \in \mathbb{R}^{n \times n}$ to $X \in\mathbb{S}^n_v$ may be computed by first extending $v$ to an orthogonal matrix $V\in\mathbb{R}^{n \times n}$ (e.g., by using QR decomposition); partitioning 
\[
V^\tp AV =\begin{bmatrix}
\alpha_{11} & a_{12}^\tp \\
a_{21} & A_{22}
\end{bmatrix}
\]
with $\alpha_{11}\in\mathbb{R}$, $a_{12}, a_{21} \in \mathbb{R}^{n-1}$, and $A_{22}\in\mathbb{R}^{(n-1)\times (n-1)}$; and finally computing
\[
X = V\begin{bmatrix}a_{11} & 0\\
0 & (A_{22}+A_{22}^\tp )/2\end{bmatrix}V^\tp.
\]

It remains to address the projections for \ref{it:pos}. We introduce more standard notations: nonnegative and positive semidefinite $X \in \mathbb{R}^{n \times n}$ are denoted
\[
X \ge 0, \qquad X \succeq 0
\]
respectively, i.e., the former means that $x_{ij} \ge 0$ for all $i,j=1,\dots,n$ whereas the latter means that the quadratic form $v^\tp X v \ge 0$ for all $v \in \mathbb{R}^n$. The convex sets of nonnegative matrices and of symmetric positive semidefinite matrices are denoted 
\[
\mathbb{R}^{n \times n}_\p \coloneqq \{X \in \mathbb{R}^{n \times n} : X \ge 0 \},\qquad
\mathbb{S}^n_\p \coloneqq \{X \in \mathbb{S}^n :  X \succeq 0 \}
\]
respectively. It is well-known that the projections of a matrix $A\in\mathbb{R}^{m \times n}$ onto these sets are given by simply zeroing out negative entries or negative eigenvalues. For $\mathbb{R}^{m \times n}_\p$, it is an exercise: the projection $X \in \mathbb{R}^{n \times n}_\p$ is given by
\[
X=\max(A,0)
\]
where $\max(\,\cdot\,, 0)$ is applied coordinatewise to a matrix. For $\mathbb{S}^n_\p$, it is slightly more involved  \cite{Hig4}: the projection $X\in \mathbb{S}^n_\p$ is given by taking the symmetric eigenvalue decomposition  $(A+A^\tp)/2=V\Lambda V^\tp $ and setting
\[
X =V\max(\Lambda,0) V^\tp.
\]
The same projection $X$ may also be computed with polar decomposition  \cite{Hig4,polar}.

The convex sets of stochastic, doubly stochastic, and correlation matrices  \cite{Horn} are
\begin{align*}
\operatorname{Stoc}_n(\mathbb{R}) &\coloneqq \Bigl\{X\in\mathbb{R}^{n \times n}_\p:
\sum_{j=1}^n x_{ij} = 1, \;  i=1,\dots,n\Bigr\},\\
\operatorname{DStoc}_n(\mathbb{R}) &\coloneqq \Bigl\{X\in\mathbb{R}^{n \times n}_\p:
\sum_{j=1}^n x_{ij} = \sum_{i=1}^n x_{ij} = 1, \; i,j=1,\dots,n\Bigr\}, \\
\operatorname{Corr}_n(\mathbb{R}) &\coloneqq \{X\in\mathbb{S}^n_\p: x_{ii} = 1, \; i=1,\dots,n\}.
\end{align*}
By our discussion in Section~\ref{sec:repeat}, we only need to address the question of projections onto
\begin{align*}
\mathcal{C} &\coloneqq \Bigl\{X\in\mathbb{R}^{n \times n}: \sum_{j=1}^n x_{ij} = 1, \;  i=1,\dots,n\Bigr\},\\
\mathcal{R} &\coloneqq \Bigl\{X\in\mathbb{R}^{n \times n} : \sum_{i=1}^n x_{ij} = 1, \; j=1,\dots,n\Bigr\}, \\
\mathcal{D} &\coloneqq \{X\in\mathbb{S}^n : x_{ii} = 1, \; i=1,\dots,n\}
\end{align*}
since
\[
\operatorname{Stoc}_n(\mathbb{R})  = \mathcal{C} \cap  \mathbb{R}^{n \times n}_\p, \qquad
\operatorname{DStoc}_n(\mathbb{R})  = \mathcal{C} \cap \mathcal{R} \cap  \mathbb{R}^{n \times n}_\p, \qquad
\operatorname{Corr}_n(\mathbb{R})  = \mathcal{D} \cap  \mathbb{S}^n_\p.
\]
The projections of $A\in\mathbb{R}^{n \times n}$ onto $\mathcal{C}$, $\mathcal{R}$,  $\mathcal{D}$ are easily seen to be given respectively by
\[
x_{ij}=  a_{ij}-\frac{1}{n}\biggl[\sum_{k=1}^n a_{ik}-1\biggr] ,\quad x_{ij} = a_{ij}-\frac{1}{n}\biggl[\sum_{k=1}^n a_{kj}-1\biggr], \quad x_{ij} =
\begin{cases}
1, & i=j,\\
a_{ij} & i \ne j,
\end{cases}
\]
where $i,j = 1,\dots,n$.

\section{Numerical Experiments}

We compare Algorithm~\ref{algo:iter}, which is based on numerical linear algebra (\textsc{nla}), with general algorithms based on convex optimization (\textsc{cvx}). We use problem \ref{it:pos} for illustration as these nonnegativity constraints are more complex and allow us to test our two-projection variant \eqref{eq:iter2}. We will minimize $\norm{A - BXC}$ with $X$ constrained to nonnegative, stochastic, positive semidefinite, and correlation matrices. The first two require just a single projection and thus Algorithm~\ref{algo:iter} suffices; the last two require two projections and thus call for \eqref{eq:iter2}. We compare our results against those obtained with convex optimization methods in \textsc{cvx} \cite{cvx}, using its \textsc{ecos} solver for the nonnegative and stochastic cases and its \textsc{scs} solver for the positive semidefinite and correlation cases.

We generate random matrices $B, C, X \in\mathbb{R}^{n \times n}$, with $X$ satisfying the constraint at hand, and set $A=BXC$. This represents the most common scenario --- we minimize  $\norm{A - BXC}$ when we really want to solve $A = BXC$ in the presence of errors. Furthermore, as $X$ is known and (almost surely) unique, we may use the forward error $\norm{\widehat{X}-X}/\norm{X}$  as a metric to make comparisons.

\subsection{Speed}

For each dimension $n=2^d$, we repeat our runs ten times and compare average time taken to reach a prespecified forward error. The default precisions of \textsc{cvx} are \texttt{reltol = abstol = feastol = 1e-8} for \textsc{ecos} and \verb|eps_rel = eps_abs = 1e-4| for \textsc{scs}; we scale these parameters by the dimension of the matrix $n$. We record the final forward error of \textsc{cvx} and run Algorithm~\ref{algo:iter} until it achieves the same forward error. From Figure~\ref{fig:plot2}, Algorithm~\ref{algo:iter} outperforms \textsc{cvx} significantly in speed for large $n$. Indeed, the range of dimensions is limited by \textsc{cvx}, which fails to converge for large $n$. To get a rough idea, for $n=2^7$, \textsc{cvx} took about half an hour when Algorithm~\ref{algo:iter} took less than a second. For $n = 2^8$ and beyond, \textsc{cvx} did not converge within 24 hours.

\begin{figure}[htb]
    \includegraphics[width=0.49\textwidth,trim={3ex 1ex 8ex 5ex},clip]{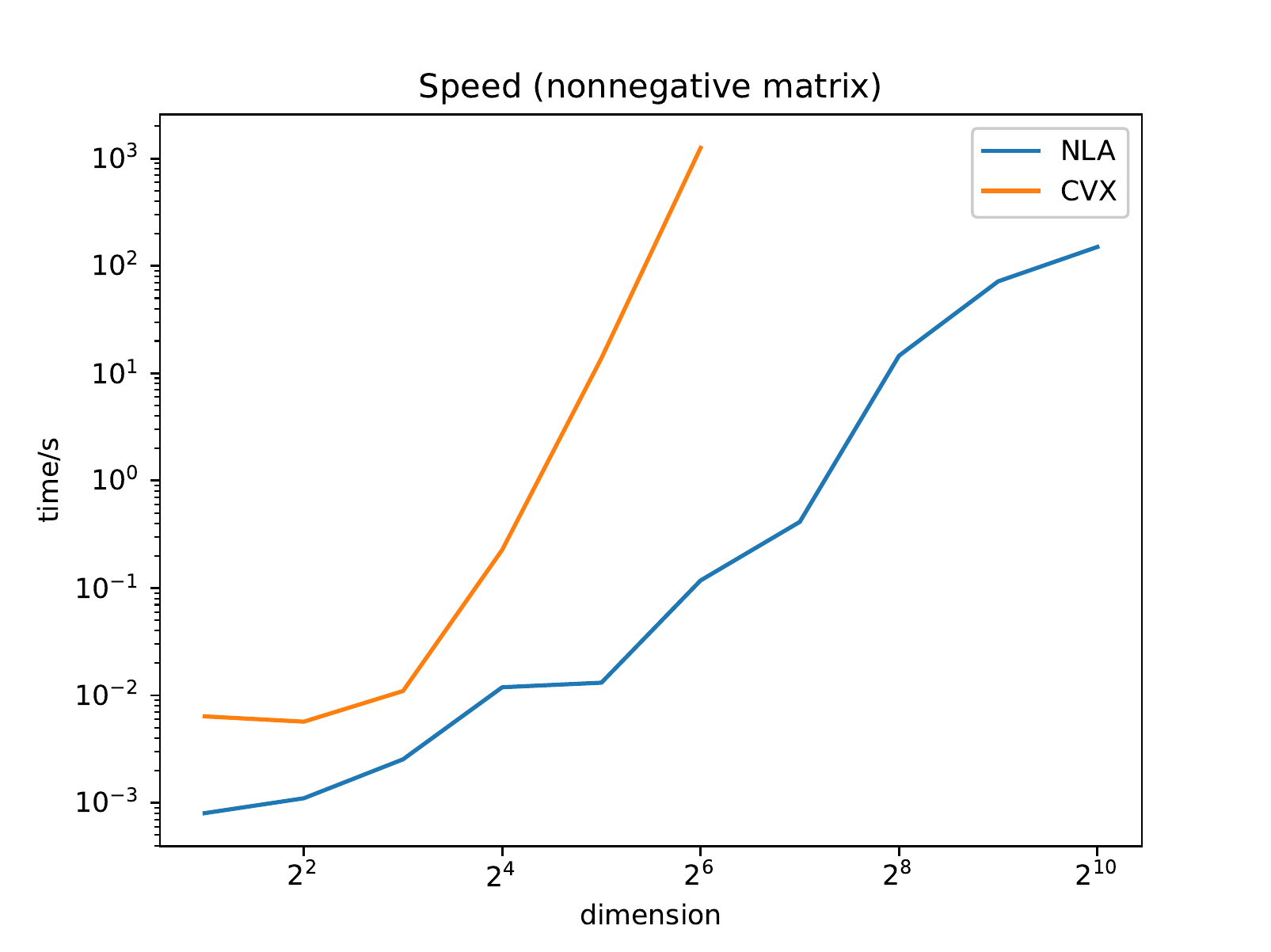}
    \includegraphics[width=0.49\textwidth,trim={2ex 1ex 9ex 5ex},clip]{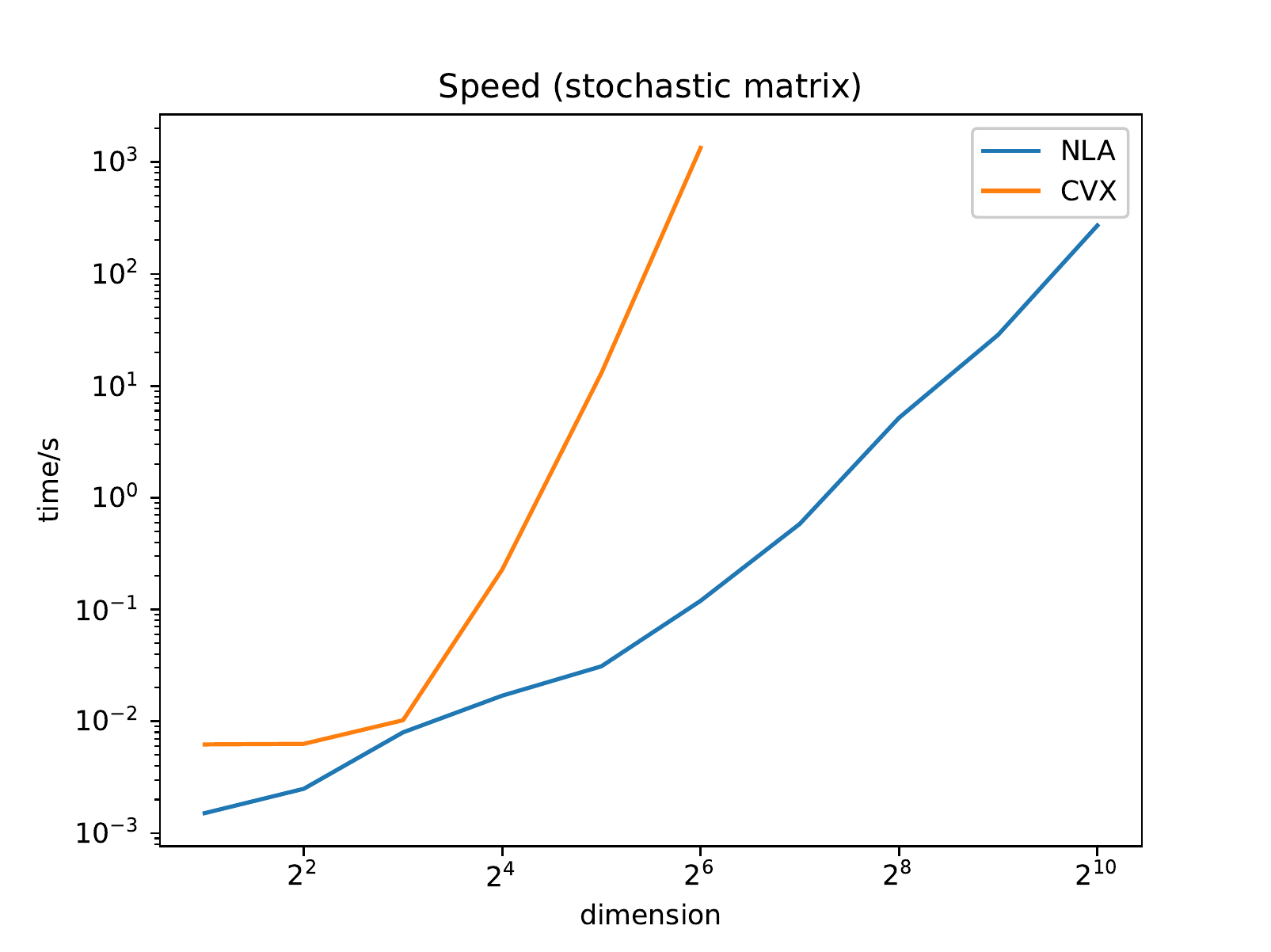}
    \includegraphics[width=0.49\textwidth,trim={3ex 2ex 8ex 3ex},clip]{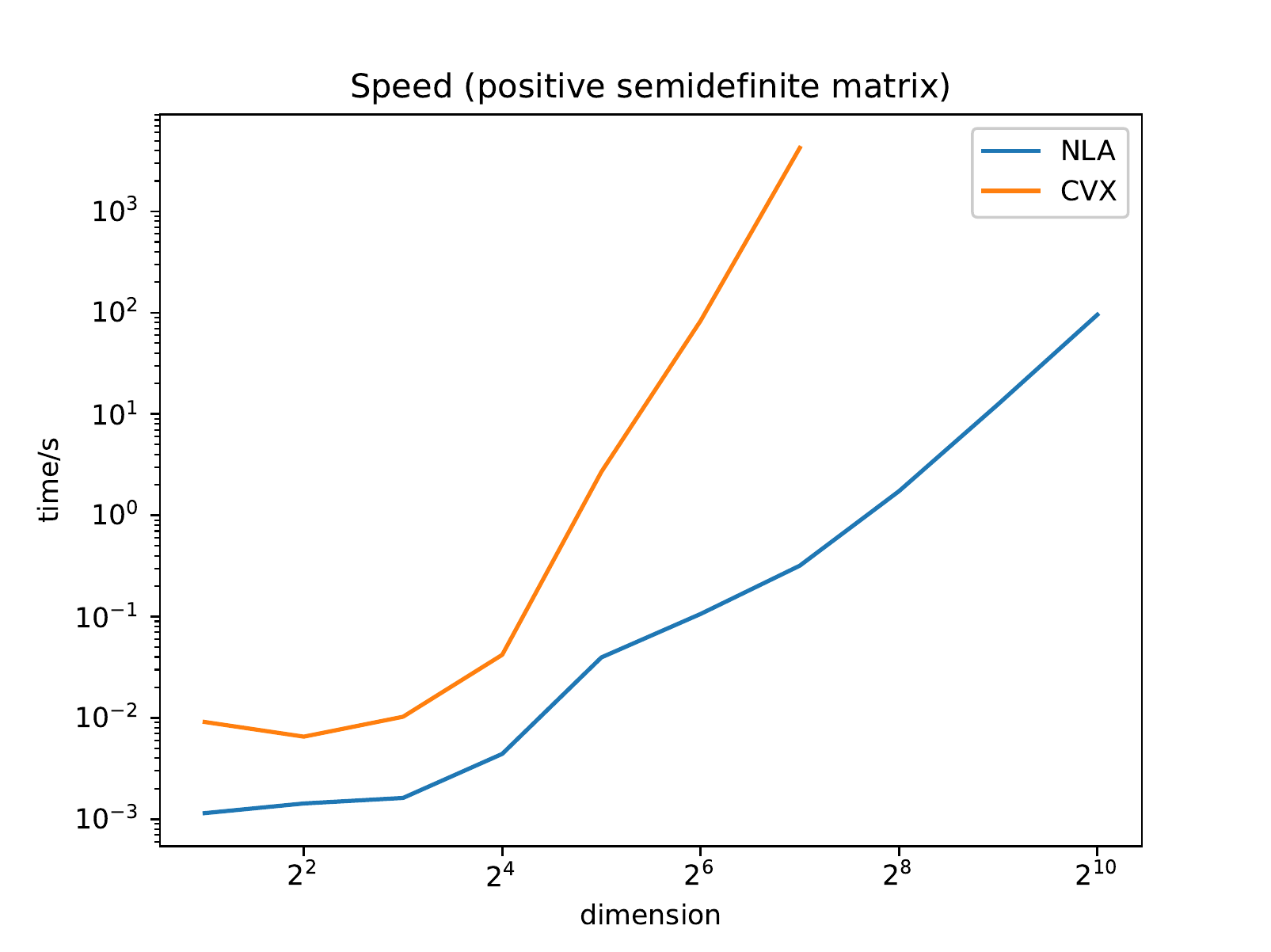}
    \includegraphics[width=0.49\textwidth,trim={2ex 2ex 9ex 3ex},clip]{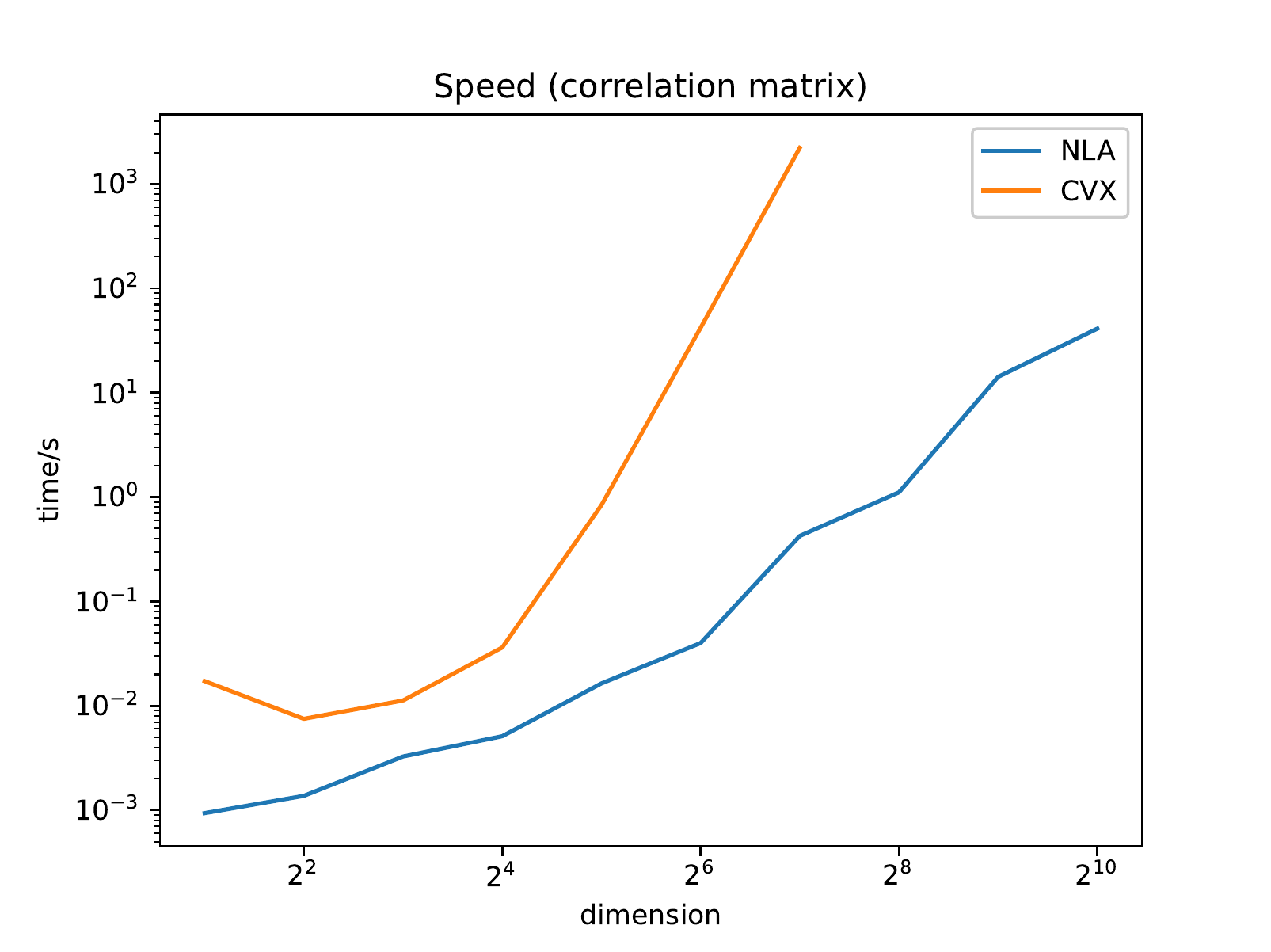}
\caption{Speed of Algorithm~\ref{algo:iter} versus \textsc{cvx}.}
\label{fig:plot2}
\end{figure}

These results are within expectation: in convex optimization, these problems are transformed to convex quadratic programs or semidefinite programs; both require at least $O(n^4)$ operations per iteration, which is prohibitive for large $n$. In our Algorithm~\ref{algo:iter} and its two-projection variant \eqref{eq:iter2}, the dominating cost is the projection onto $\mathcal{S}$ --- this is essentially free for nonnegative and stochastic matrices, requiring only $O(n^2)$ operations per iteration; for correlation and positive semidefinite matrices, projections require around $O(n^3)$ operations per iteration.

\subsection{Accuracy}

Here we compare the minimum possible forward error each method can achieve. We set $n=32$ as \textsc{cvx} may fail to converge in a reasonable amount of time for larger values of $n$. We set \textsc{cvx} to its maximum allowed precisions, \verb|reltol = abstol = feastol = 1e-16| for \textsc{ecos} and \verb|eps_rel = eps_abs = 1e-16| for \textsc{scs}, and record its \emph{final} forward error. Then we run Algorithm~\ref{algo:iter} for 5,000 iterations and record its forward error \emph{at each iteration}. Note that it is not meaningful to make iterationwise comparisons here as the two algorithms are entirely different. From Figure~\ref{fig:plot1}, we see that Algorithm~\ref{algo:iter} reaches beyond the maximum possible accuracy of \textsc{cvx}  in every case. The linear convergence rate in Theorem~\ref{thm:linconv} is also evident from these plots.

\begin{figure}[htb]
    \includegraphics[width=0.49\textwidth,trim={1ex 2ex 8ex 5ex},clip]{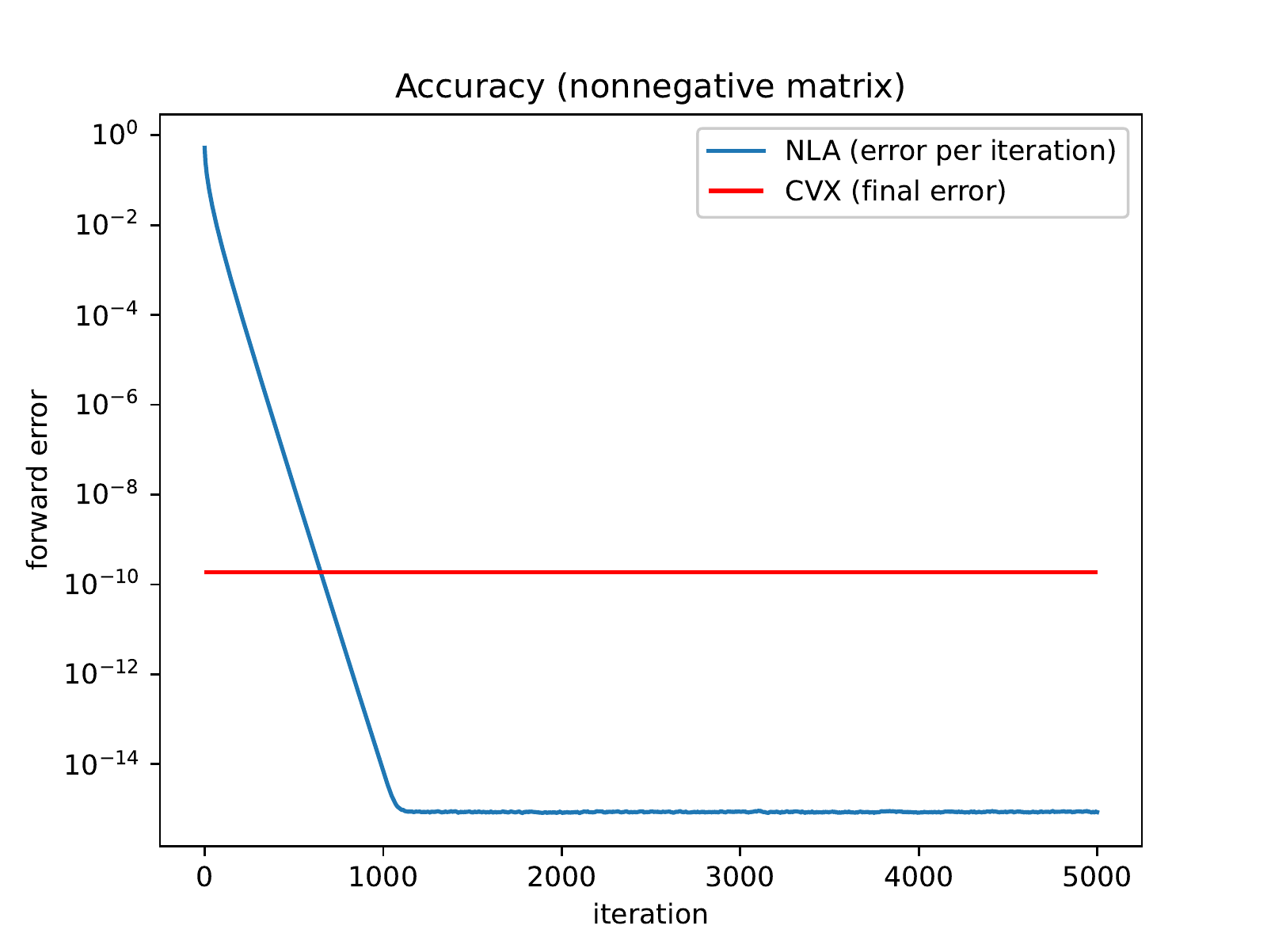}
    \includegraphics[width=0.49\textwidth,trim={2ex 2ex 9ex 5ex},clip]{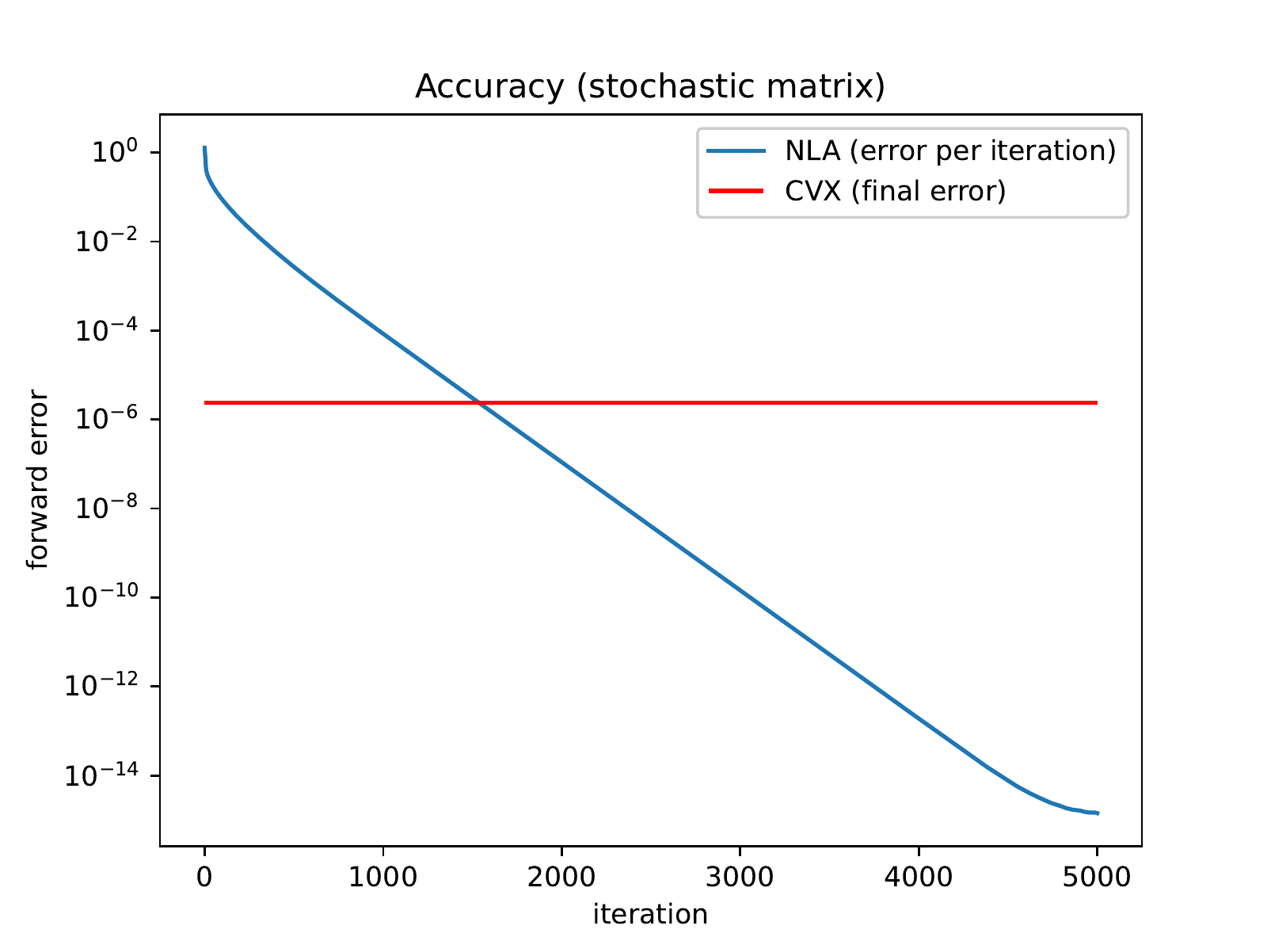}
    \includegraphics[width=0.49\textwidth,trim={1ex 2ex 8ex 3ex},clip]{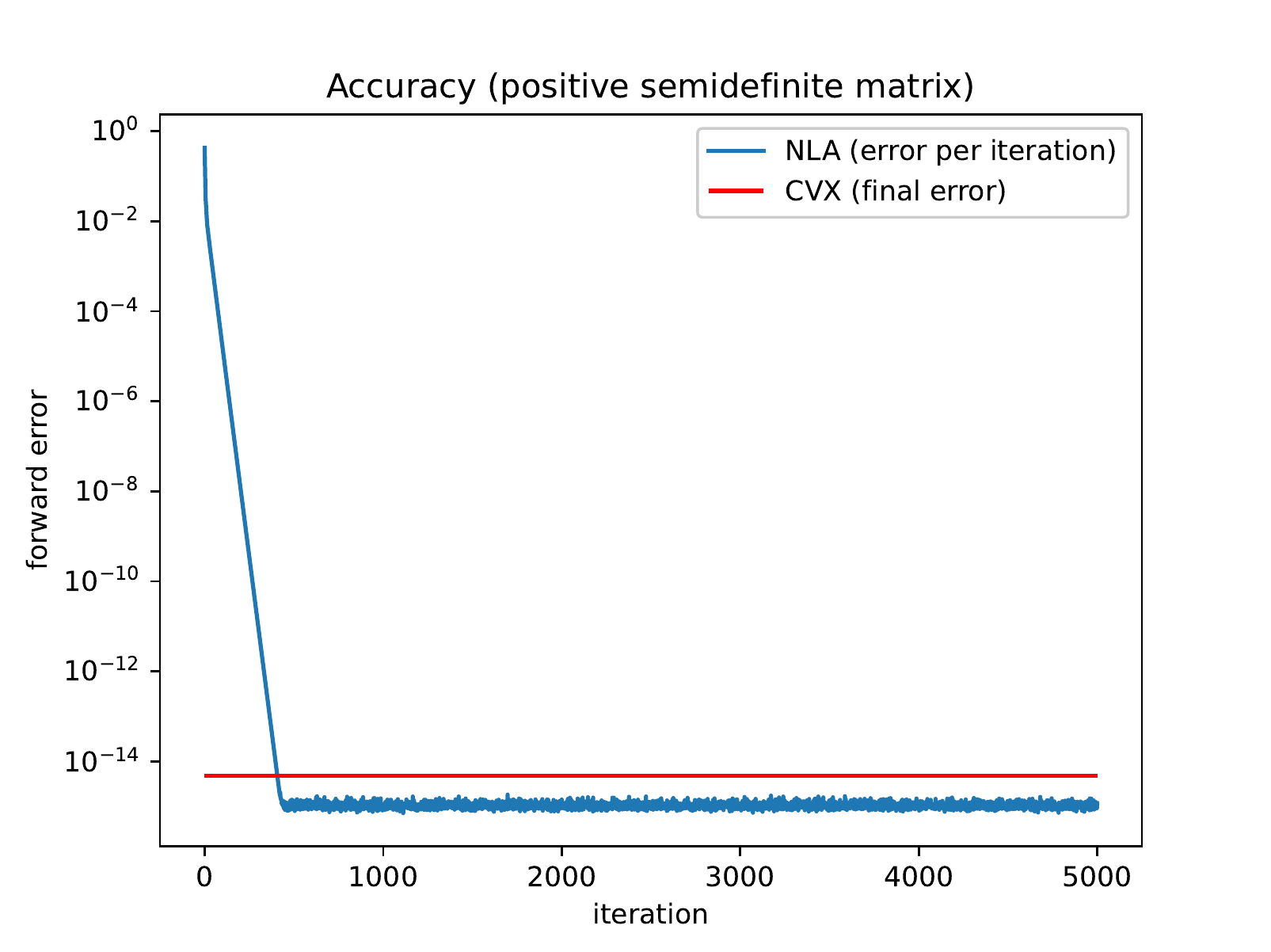}
    \includegraphics[width=0.49\textwidth,trim={2ex 2ex 9ex 3ex},clip]{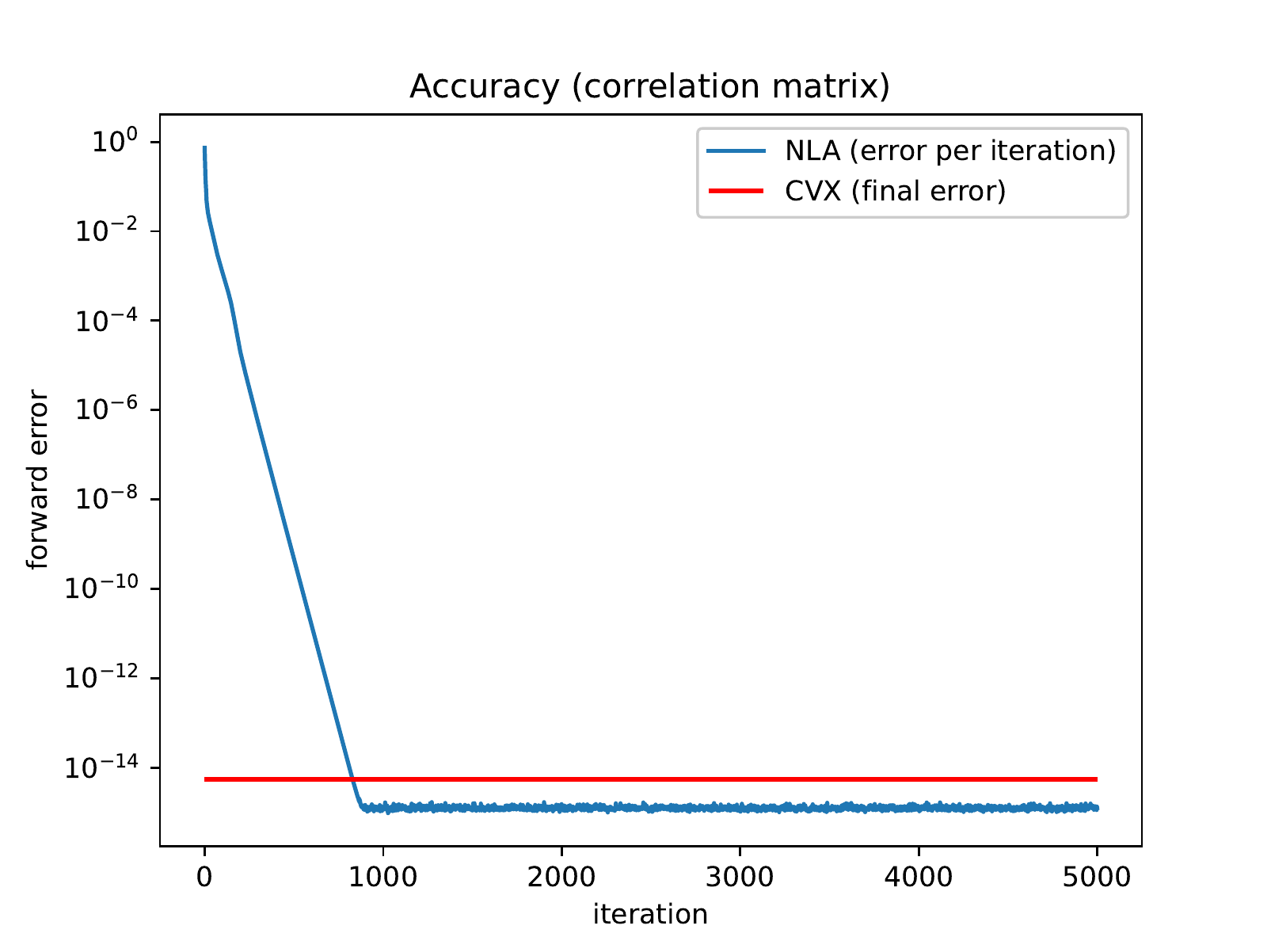}
\caption{Accuracy of Algorithm~\ref{algo:iter} versus \textsc{cvx}. Here the red lines indicate the level of maximum possible accuracy of \textsc{cvx}.}
\label{fig:plot1}
\end{figure}

\section{Conclusion}

Likely because of the increasing awareness of convex optimization as a potent tool in many areas, there has been a tendency to apply general purpose convex optimization methods to any convex problem. However convex problems like those considered in this article often have more structures than mere convexity. We show that approaching such problems through numerical linear algebra in the spirit of \cite{Gol1,Gol2,Hig4,Hig2,Hig3,Hig1,Kell,Rao,Wil2} can sometimes lead to better results, and has the advantage of working for the occasional nonconvex problem like \eqref{eq:Fri}.

\subsection*{Acknowledgment} The first author thanks Bartolomeo Stellato for helpful discussions.

\bibliographystyle{abbrv}

\end{document}